\newtheorem{thm}{Theorem}[section]
\newtheorem*{conj1}{The Lindel\"{o}f Hypothesis}
\newtheorem*{conj*}{Conjecture}
\newtheorem{conj}[thm]{Conjecture}
\newtheorem{lem}[thm]{Lemma}
\newtheorem*{thm*}{Theorem}
\newtheorem*{prop*}{Proposition}
\def\f{\frac}
\def\l{\left}
\def\r{\right}
\def\be{\begin{equation}}
\def\ee{\end{equation}}
\def\ba{\begin{align*}}
\def\eal{\end{align*}}
\def\inter{\intertext}
\begin{document}

\title[Uniform upper bounds of relatively r-prime]{Uniform upper bounds of the distribution of relatively r-prime lattice points}
\author{Wataru Takeda}
\address{Department of Mathematics, Kyoto University, Kitashirakawa Oiwake-cho, Sakyo-ku, Kyoto 606-8502,
Japan}
\email{takeda-w@math.kyoto-u.ac.jp}
\subjclass[2010]{11H06,11P21,11N45,11R42,52C07}
\keywords{ideal counting function, relatively r-prime, Lindel\" of Hypothesis, subconvexity bound, arithmetic function}

\begin{abstract}
We estimate the distribution of relatively $r$-prime lattice points in number fields $K$ with their components having a norm less than $x$.  In the previous paper we obtained uniform upper bounds as $K$ runs through all number fields under assuming the Lindel\"of hypothesis. And we also showed unconditional results for abelian extensions with a degree less than or equal to $6$. In this paper we remove all assumption about number fields and improve uniform upper bounds. Throughout this paper we consider estimates for distribution of ideals of the ring of integer $\mathcal{O}_K$ and obtain uniform upper bounds. And when $K$ runs through cubic extension fields we show better uniform upper bounds than that under the Lindel\"  of Hypothesis.
\end{abstract}

\maketitle
\section{Introduction}
Let $K$ be a number field and let $\mathcal{O}_K$ be its ring of integers. 
We regard an $m$-tuple of ideals $(\mathfrak{a}_1, \mathfrak{a}_2,\ldots,\mathfrak{a}_m)$ of $\mathcal{O}_K$ as a lattice point in $K^m$.  We say that a lattice point $(\mathfrak{a}_1, \mathfrak{a}_2,\ldots,\mathfrak{a}_m)$ is {\it relatively $r$-prime} for a positive integer $r$, if there exists no prime ideal $\mathfrak{p}$ such that $\mathfrak{a}_1, \mathfrak{a}_2,\ldots,\mathfrak{a}_m\subset \mathfrak{p}^r$. Let $V^r_m(x,K)$ denote the number of relatively $r$-prime lattice points $(\mathfrak{a}_1, \mathfrak{a}_2,\ldots,\mathfrak{a}_m)$ such that for all $i=1,\ldots,m$ their ideal norm $\mathfrak{Na}_i\le x$. One can show that for all $x\ge1$ and for all number field $K$ the number of relatively 1-prime lattice points $V_1^1(x,K)=1$, so in this paper we assume $rm\ge2$.

There are many results about $V_m^r(x,K)$ from 1800's. In the case $K=\mathbf Q$, integer ideals of $\mathbf Z$ and positive integers are in one-to-one correspondence, so it suffices to consider the distribution of positive integers with special properties. L. Gegenbauer proved that the probability that a positive integer is not divisible by an $r$-th power is $1/\zeta(r)$ for $r\ge2$ in 1885 \cite{ge85} and D. N. Lehmer proved that the probability that $m$ positive integers are relatively prime is $1/\zeta(m)$ in 1900 \cite{Le00}. In 1976 S. J. Benkoski showed that $V_m^r(x,\mathbf Q)\sim x^m/\zeta(rm)$ as a full generalization of these results \cite{Be76}.  

In the general case, B. D. Sittinger dealt with ideals in $\mathcal O_K$ rather than algebraic integers themselves. He showed 
\begin{thm*}[cf. \cite{St10}]
Let $n = [K : \mathbf Q]$ then  
\[V^r_m(x,K)=\frac{c^m}{\zeta_K(rm)}x^m+\left\{
\begin{array}{ll}
O\l(x^{m-\f1n}\r) & \text{ if } m\ge3, \text{ or } m=2 \text{ and } r\ge2,\\
O\l(x^{2-\f1n}\log x\r)& \text{ if }m=2 \text{ and } r=1,\\
O\l(x^{1-\f1n}\log x\r) & \text{ if }m=1 \text{ and }  \frac{n(r-2)}{r-1}=1,\\
O\l(x^{1-\f1n}\r)& \text{ if }m=1 \text{ and } \frac{n(r-2)}{r-1}>1,\\
O\l(x^{\f1r\l(2-\f1n\r)}\r)&\text{ if }m=1 \text{ and } \frac{n(r-2)}{r-1}<1,
\end{array}
\right.\]
where $\zeta_K$ is the Dedekind zeta function over $K$ and $c$ is the residue of $\zeta_K(s)$ at $s=1$. 
\end{thm*}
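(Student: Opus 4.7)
The approach I would take is Möbius inversion on the ideals of $\mathcal{O}_K$ combined with Landau's asymptotic $F_K(y)=cy+O(y^{1-1/n})$, where $F_K(y):=\#\{\mathfrak{a}\subseteq\mathcal{O}_K:\mathfrak{N}\mathfrak{a}\le y\}$. Since $\sum_{\mathfrak{d}^{r}\mid\gcd(\mathfrak{a}_{1},\dots,\mathfrak{a}_{m})}\mu_{K}(\mathfrak{d})$ equals $1$ exactly when $(\mathfrak{a}_{1},\dots,\mathfrak{a}_{m})$ is relatively $r$-prime and $0$ otherwise, swapping summations in the definition of $V^{r}_{m}(x,K)$ yields
\[
V^{r}_{m}(x,K)=\sum_{\mathfrak{N}\mathfrak{d}^{r}\le x}\mu_{K}(\mathfrak{d})\,F_{K}\!\left(\frac{x}{\mathfrak{N}\mathfrak{d}^{r}}\right)^{m}.
\]

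Substituting Landau's estimate and expanding the $m$-th power binomially splits the right-hand side into a main piece and $m$ cross-term layers. The main piece gives
\[
c^{m}x^{m}\sum_{\mathfrak{N}\mathfrak{d}^{r}\le x}\frac{\mu_{K}(\mathfrak{d})}{\mathfrak{N}\mathfrak{d}^{rm}}=\frac{c^{m}x^{m}}{\zeta_{K}(rm)}+O(x^{1/r}),
\]
the Dirichlet-series tail being controlled by Abel summation against $F_{K}$ (this is where $rm\ge 2$ enters). The $k$-th cross term is majorized by $O(x^{m-k/n}/\mathfrak{N}\mathfrak{d}^{rm-rk/n})$, so the entire analysis reduces to estimating the partial sums $S_{k}:=\sum_{\mathfrak{N}\mathfrak{d}\le x^{1/r}}\mathfrak{N}\mathfrak{d}^{-s_{k}}$ with $s_{k}:=rm-rk/n$.

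The behaviour of $S_{k}$ falls into three regimes according to whether $s_{k}$ is greater than, equal to, or less than $1$: the $k$-th layer contributes $O(x^{m-k/n})$, $O(x^{m-k/n}\log x)$, or $O(x^{1/r})$ respectively, the last coming from $S_{k}=O(x^{(1-s_{k})/r})$ combined with the prefactor. For $m\ge 2$ the dominant layer is $k=1$ since $s_{1}=rm-r/n>1$ throughout, producing the announced $O(x^{m-1/n})$ bound together with the logarithmic variant at the critical line $m=2,r=1$. For $m=1$ the main-term tail $O(x^{1/r})$ is genuinely comparable to the divergent part of $S_{1}$, so one also introduces a truncation parameter $D$ in the Möbius sum and balances the two contributions; the three sub-cases in the statement arise from this optimization, the condition $n(r-2)/(r-1)\gtrless 1$ being the algebraic form of the critical threshold, and the exponent $(1/r)(2-1/n)$ appearing when $D$ is forced to sit at an intermediate scale between $1$ and $x$.

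\textbf{Main obstacle.} The delicate step is the single-ideal analysis $m=1$, where the standard Möbius estimate and the main-term tail are of the same order and must be balanced against each other through a truncation optimization. Identifying the correct threshold in the form $n(r-2)/(r-1)\gtrless 1$, and verifying that the logarithmic factors appear at exactly the announced boundaries and nowhere else, are the pieces of bookkeeping that require the most care.
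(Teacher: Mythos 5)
Your proposal is correct and follows essentially the same route as the paper: this statement is quoted from Sittinger and not reproved here, but your argument is exactly the specialization of the paper's Lemma \ref{main} (M\"obius inversion, binomial expansion, and the three convergence regimes of $\sum_{\mathfrak{N}\mathfrak{d}\le x^{1/r}}\mathfrak{N}\mathfrak{d}^{-s_k}$) to a fixed field with Landau's error term $O(y^{1-1/n})$. One minor remark: for $m=1$ no truncation optimization is actually needed, since the untruncated sum already yields $O\bigl(\max(x^{1-1/n},x^{1/r})\bigr)$ up to a logarithm, which is at least as strong as every stated bound, the threshold $n(r-2)/(r-1)\gtrless 1$ being precisely where the exponents $1-\frac1n$ and $\frac1r(2-\frac1n)$ cross.
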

It is well known that
\begin{equation}
c=\frac{2^{r_1}(2\pi)^{r_2}hR}{w\sqrt{D}},\label{crho}
\end{equation}
where $h$ is the class number of $K$, $r_1$ is the number of real embeddings of $K$, $r_2$ is the number of pairs of
complex embeddings, $R$ is the regulator of $K$, $w$ is the number of roots of unity in $\mathcal{O}^*_K$ and $D$ is absolute value of the discriminant of $K$. This fact (\ref{crho}) is shown from the following fact (\ref{simm}) about the distribution of ideals of $\mathcal O_K$:

Let $I_K(x)$ be the number of ideals of $\mathcal{O}_K$ with their ideal norm less than or equal to $x$. Then 
\begin{equation}
I_K(x)\sim\frac{2^{r_1}(2\pi)^{r_2}hR}{w\sqrt{D}}x\text{ as $x\rightarrow\infty$.}\label{simm}
\end{equation}
 For the proof of this result (\ref{crho}), please see Theorem 5 in Section 8 of \cite{La90}. We denote $\Delta_K(x)$ be the error term of $I_K(x)$, that is, $I_K(x)-cx$.

In the previous paper \cite{Ta16}, we studied some relation between the distribution of relatively $r$-prime lattice points and the Lindel\"{o}f Hypothesis.  This hypothesis is extended for many L-functions. In Iwaniec and Kowalski's book \cite{Iw04}, this hypothesis is written as
\begin{conj1}
Let $L(s,\chi)$ be an $L$-function having an Euler product of degree $d$, then for every $\varepsilon>0$, 
\[
L\l(\f12+it,\chi\r)=O\l(\mathfrak{q}(s,\chi)^\varepsilon\r)\text{ as }\frak q\rightarrow\infty,
\]
where $\mathfrak{q}(s,\chi)$ is the analytic conductor and the constant implied in $O$ depends on $\varepsilon$ alone.  The analytic conductor $\mathfrak{q}(s,\chi)$ is defined as
\[
\mathfrak{q}(s,\chi)=q(\chi)\prod_{j=1}^d(|s+\kappa_j|+3),
\]
where $q(\chi)$ is the conductor of $\chi$ and $\kappa_j$ is the local parameters of $L(s,\chi)$ at infinity.
For the details for the definition of the analytic conductor, one can see Iwaniec and Kowalski's book \cite{Iw04}.
\end{conj1}

In the previous papers, we obtained estimates for the error term
\[E_m^r(x,K)=V_m^r(x,K)-\frac{c^m}{\zeta_K(rm)}x^m\]
as follows:
\begin{thm*}[cf.\cite{Ta16},\cite{Ta17}]
For any fixed number field $K$ with $n = [K : \mathbf Q]$ and for all $\varepsilon>0$
\[E^r_m(x,K)=\left\{
\begin{array}{ll}
O\l(x^{2-\alpha(n)}(\log x)^{2\beta(n)+1}\r)&\text{ if } (m,r)=(2,1),\\
O\l(x^{1-\f{\alpha(n)}2}(\log x)^{2\beta(n)}\r)&\text{ if } (m,r)=(1,2),\\
O\l(x^{m-\alpha(n)}(\log x)^{\beta(n)}\r)&\text{ otherwise }
\end{array}
\right.\]
with
\[\alpha(n)=\left\{
\begin{array}{ll}
\frac2n-\frac8{n(5n+2)}&\text{ if } 3\le n\le6,\\
\frac2n-\frac3{2n^2}&\text{ if } 7\le n\le9\rule[-2mm]{0mm}{6mm},\\
\frac3{n+6}-\varepsilon&\text{ if } n\ge10
\end{array}
\right.\text{ and }
\beta(n)=\left\{
\begin{array}{ll}
\frac{10}{5n+2}&\text{ if } 3\le n\le6,\\
\frac 2n&\text{ if } 7\le n\le9\rule[-2mm]{0mm}{6mm},\\
0&\text{ if } n\ge10.
\end{array}
\right.\]

Moreover if we assume the Lindel\"{o}f Hypothesis for $\zeta_K(s)$, we get \[E_m^r(x,K)=\left\{
\begin{array}{ll}
O\l(x^{\f34+\varepsilon}\r)&\text{ if } (m.r)=(1,2),\\
O\l(x^{m-\f12+\varepsilon}\r)&\text{ otherwise }
\end{array}
\right.\]
for all $\varepsilon>0$.
\end{thm*}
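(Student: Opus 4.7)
The plan is to reduce $V_m^r(x,K)$ to a weighted sum of values of the ideal counting function $I_K$ by M\"obius inversion on the ideal lattice of $\mathcal{O}_K$, and then feed in pointwise bounds on $\Delta_K$ coming from subconvexity estimates for $\zeta_K(s)$ on the critical line. Since an $m$-tuple is relatively $r$-prime precisely when its ideal gcd is not divisible by any $\mathfrak{p}^r$,
\[
V_m^r(x,K)=\sum_{N\mathfrak{a}^r\le x}\mu_K(\mathfrak{a})\,I_K\!\l(\frac{x}{N\mathfrak{a}^r}\r)^m,
\]
where $\mu_K$ is the ideal-theoretic M\"obius function. Substituting $I_K(y)=cy+\Delta_K(y)$ from (\ref{simm}) and expanding by the binomial theorem, the $k=m$ term contributes $c^mx^m\sum_{N\mathfrak{a}^r\le x}\mu_K(\mathfrak{a})(N\mathfrak{a})^{-rm}$, which completes to the asserted main term $c^mx^m/\zeta_K(rm)$ with a negligible tail provided $rm\ge 2$.

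The remaining binomial pieces are
\[
T_k=\sum_{N\mathfrak{a}^r\le x}\mu_K(\mathfrak{a})\l(\frac{cx}{N\mathfrak{a}^r}\r)^{\!k}\Delta_K\!\l(\frac{x}{N\mathfrak{a}^r}\r)^{\!m-k}\qquad(0\le k<m),
\]
and the entire estimate rests on a pointwise bound for $\Delta_K$. Applying Perron's formula to $\zeta_K$ and shifting the contour to the critical line, one controls the vertical integral by a bound on $\zeta_K(1/2+it)$: under the Lindel\"of Hypothesis this yields $|\Delta_K(y)|\ll y^{1/2+\varepsilon}$, while unconditionally the three ranges of $n$ in the statement correspond to the three best-known subconvexity regimes for $\zeta_K$ (a Heath-Brown-type estimate for $3\le n\le 6$, a Weyl-type estimate for $7\le n\le 9$, and pure convexity with an $\varepsilon$ for $n\ge 10$), translating into $|\Delta_K(y)|\ll y^{1-\alpha(n)}(\log y)^{\beta(n)}$. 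Inserting either bound into $T_k$ reduces the estimate to a Dirichlet-type tail $\sum_{N\mathfrak{a}\le x^{1/r}}(N\mathfrak{a})^{-\sigma}$, whose size dichotomises at $\sigma=1$; the dominant term $T_{m-1}$ then produces $x^{m-\alpha(n)}(\log x)^{\beta(n)}$ unconditionally and $x^{m-1/2+\varepsilon}$ under LH, while the two pairs $(m,r)=(2,1)$ and $(m,r)=(1,2)$ sit precisely at the boundary $\sigma=1$ and therefore pick up the additional factor of $\log x$ recorded in the statement (and explain the exponent $3/4$ rather than $1/2$ in the LH case $(1,2)$).

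The main obstacle lies in this second step. Transferring subconvexity estimates for $\zeta_K(1/2+it)$ to sharp pointwise bounds on $\Delta_K(x)$ \emph{uniformly in} $K$, while keeping track of the precise log-exponent $\beta(n)$, requires a careful optimisation of the Perron truncation parameter $T$ against the critical-line estimate, and each of the three $n$-regimes demands its own optimal choice. Once those $\Delta_K$ bounds are secured, the remainder is essentially bookkeeping: summing Dirichlet tails, identifying the $(m,r)$ pairs at which the inner sum reaches its edge of convergence, and combining the binomial contributions to confirm that $T_{m-1}$ dominates.
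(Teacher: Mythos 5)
Your reduction of $V_m^r(x,K)$ to the ideal-counting function --- the M\"obius identity over ideals, the binomial expansion isolating the main term $c^mx^m/\zeta_K(rm)$, and the bookkeeping of the tail sums $\sum_{\mathfrak{N}\mathfrak a\le x^{1/r}}(\mathfrak{N}\mathfrak a)^{-\sigma}$ with a $\log x$ appearing exactly at the edge of convergence --- is the same mechanism the paper formalizes as Lemma \ref{main} (specialized to a fixed $K$, so that $D$ is absorbed into the implied constants). The statement itself is only quoted in this paper from \cite{Ta16} and \cite{Ta17}, so no proof of it appears here; but this part of your argument is consistent with the paper's framework.

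The genuine gap is in your second step, the claimed source of the pointwise bounds $|\Delta_K(y)|\ll y^{1-\alpha(n)}(\log y)^{\beta(n)}$. A Perron-plus-critical-line argument of the kind you describe cannot produce these exponents. First, optimizing the truncation $T$ against a bound $\zeta_K(1/2+it)\ll |t|^{A+\varepsilon}$ yields $\Delta_K(y)\ll y^{1-\frac{1}{2(A+1)}+\varepsilon}$ with an unavoidable $y^{\varepsilon}$ loss; it does not yield precise fractional powers of $\log y$ such as $\beta(n)=\frac{10}{5n+2}$ or $\frac{2}{n}$, which are the signature of exponent-pair estimates applied directly to the coefficient sum $\sum_{k\le y}a_K(k)$ (Nowak/Bordell\`es-type arguments, cf.\ \cite{bo15}), not of a contour shift. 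Second, your assignment of regimes is backwards: the range $n\ge10$ is not ``pure convexity with an $\varepsilon$'' --- convexity gives only $\alpha=\frac{2}{n+2}$ (this is exactly the paper's Theorem \ref{num} with $D$ fixed), which is weaker than $\frac{3}{n+6}$ once $n>6$; the exponent $\frac{3}{n+6}$ is what drops out of the genuinely subconvex Heath--Brown/Weyl-type bound $\zeta_K(1/2+it)\ll|t|^{n/6+\varepsilon}$ via the optimization above (Lao's theorem \cite{La10}). Conversely, the ranges $3\le n\le 9$ do not come from critical-line estimates at all. So as written your proposal would not recover the stated $\alpha(n)$ and $\beta(n)$: you must import the fixed-field bounds on $\Delta_K$ from the divisor-problem literature as black boxes (or reprove them), after which your tail bookkeeping --- including the distinction among the log exponents $\beta$, $2\beta$ and $2\beta+1$ in the three cases, which you currently gloss over as ``an additional factor of $\log x$'' --- completes the argument along the lines of Lemma \ref{main}.
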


In the case that $[K:\mathbf Q]=3$ this theorem states that the error term \[E_m^r(x,K)=\left\{
\begin{array}{ll}
o\l(x^{\f34+\varepsilon}\r)&\text{ if } (m.r)=(1,2),\\
o\l(x^{m-\f12+\varepsilon}\r)&\text{ otherwise }
\end{array}
\right.\]
for all $\varepsilon>0$, that is, we obtained better result than we assumed the Lindel\"of hypothesis. 

We fixed a number field $K$ in above theorems.  On the other hand, we also showed the following results about $E_m^r(x,K)$ with the number field $K$ being varied by restricting the degree of $K$ to be less than
or equal to $6$. 
\begin{thm*}[cf.\cite{tk17}]
Let $S$ be a subset of $\{K : \text{abelian extension field}~|~[K:\mathbf Q]\le6\}$ and
$n=\max\{[K : \mathbf Q] : K\in S\}$. Then for every $\varepsilon> 0$, we have
\[E_m^r(x,K)=\left\{ 
\begin{array}{lll} 
O\l(x^{\f1r\l(\f{23929}{15960}+\f{89n}{1140}+\varepsilon\r)}D^{\f{31}{95}-\f{m-1}2}\r)& \text{ if }\ rm=2,\\
O\l(x^{m+\f{89n}{1140}-\f{7991}{15960}+\varepsilon}D^{\f{31}{190}-\f{m-1}{2}}\r)&\text{ otherwise }
\end{array}
\right.\] 
as $x,D\rightarrow\infty$, where $\gamma(n)=\frac{7991}{15960}-\frac{89n}{1140}-\varepsilon$ and $K$ runs through elements in $S$ satisfying that $x^{\f1{753}+\varepsilon} > D$.

Moreover if we assume the Lindel\"{o}f Hypothesis for $\zeta_K(s)$, then it holds for all $\varepsilon> 0$ that
\[E_m^r(x,K)=\left\{
\begin{array}{ll}
O\l(x^{\f1r(\f32+\varepsilon)}D^{2\varepsilon-\f{m-1}2}\r)&\text{ if } rm=2,\\
O\l(x^{m-\f12+\varepsilon}D^{\varepsilon-\f{m-1}2}\r)&\text{ otherwise }
\end{array}
\right.\]
as $xD^2\rightarrow\infty$, where $K$ runs through all number fields with $x^{1-2\varepsilon}>D^{1+2\varepsilon}$.
\end{thm*}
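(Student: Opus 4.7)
The strategy is to run the Möbius-inversion argument of the previous theorems while tracking the dependence on the discriminant $D$ throughout. Starting from
\[
V_m^r(x,K) = \sum_{N\mathfrak{a}^r \le x} \mu_K(\mathfrak{a})\, I_K\!\l(\f{x}{N\mathfrak{a}^r}\r)^m,
\]
substituting $I_K(y) = cy + \Delta_K(y)$ with $c$ the residue from (\ref{crho}), and expanding the $m$-fold power produces the main term $c^m x^m/\zeta_K(rm)$ together with error terms of the form
\[
\sum_{N\mathfrak{a}^r \le x} |\mu_K(\mathfrak{a})|\!\l(\f{cx}{N\mathfrak{a}^r}\r)^{m-k}|\Delta_K(x/N\mathfrak{a}^r)|^k,\qquad 1\le k\le m,
\]
plus a tail from truncating the Möbius sum. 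Every implicit constant must now be made explicit in $D$, rather than absorbed silently as in the fixed-$K$ regime.

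The key input is a hybrid bound $\Delta_K(y)\ll y^{\theta}D^{\sigma+\varepsilon}$ uniform over $K\in S$. Since each such $K$ is abelian of degree $n\le 6$, $\zeta_K(s)$ factors as a product of at most $n$ Dirichlet $L$-functions whose conductors divide $D$. Inserting Perron's formula for $I_K(y)$, shifting the contour to $\Re s=1/2$, and applying a hybrid subconvexity estimate for $L(1/2+it,\chi)$ jointly in the $t$- and conductor-aspects yields such a bound; the values of $\theta$ and $\sigma$ are fixed by optimizing the truncation parameter $T$, and produce the numerical exponents $\f{7991}{15960}-\f{89n}{1140}$ in $y$ and $\f{31}{190}$ in $D$. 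Under the Lindel\"of Hypothesis the same contour shift, using only $L(1/2+it,\chi)\ll(|t|q(\chi))^{\varepsilon}$, gives the uniform estimate $\Delta_K(y)\ll y^{1/2+\varepsilon}D^{\varepsilon}$, from which the conditional bound follows immediately.

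With the hybrid bound on $\Delta_K$ in hand, the error sums are controlled by a dyadic decomposition over $N\mathfrak{a}$, balancing the trivial bound $\Delta_K(y)\ll y$ (efficient when $N\mathfrak{a}^r$ is close to $x$) against the subconvexity bound (efficient when $N\mathfrak{a}^r$ is far from $x$). The hypotheses $x^{1/753+\varepsilon}>D$ and $x^{1-2\varepsilon}>D^{1+2\varepsilon}$ are precisely the conditions that place the optimal split inside the nontrivial range of the hybrid bound. The case $rm=2$ needs separate treatment because the leading error is the mean-square $\sum_{N\mathfrak{a}\le x}|\Delta_K(x/N\mathfrak{a})|^2$, which I would handle by a mean-value estimate for Dirichlet $L$-functions of conductor $\le D$; this is the reason for the exponent $\f{31}{95}$ replacing $\f{31}{190}$ in that case. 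The main obstacle will be establishing the hybrid bound on $\Delta_K$ with the correct joint exponent in $y$ and $D$: the contour integral of $\zeta_K(s)y^s/s$ has to be controlled simultaneously across all $K\in S$, so the subconvexity input must be quantitative not only in the analytic conductor of each Dirichlet factor but also in how those conductors combine multiplicatively to produce $D$. Once this bound is available, the remainder of the argument reduces to a careful but essentially routine optimization.
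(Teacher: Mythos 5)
Your overall strategy is the one the paper uses: this statement is quoted from \cite{tk17}, and its proof is exactly the combination of Lemma \ref{main} (M\"obius inversion, binomial expansion keeping the $k=1$ error term, and partial summation against $I_K(y)-I_K(y-1)=O(y^{1-\alpha_S}D^{\beta_S})$) with a uniform hybrid estimate $I_K(x)=cx+O(x^{1-\alpha_S}D^{\beta_S})$ obtained from Perron's formula and the Huxley--Watt bound \eqref{HW} for the Dirichlet factors of $\zeta_K$, as carried out in Section 4; the Lindel\"of-conditional half follows the same contour argument with $L(1/2+it,\chi)\ll(|t|q(\chi))^{\varepsilon}$.

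The one concrete misstep is your explanation of the $rm=2$ case. You attribute the doubled discriminant exponent $\tfrac{31}{95}=2\cdot\tfrac{31}{190}$ to a mean-square term $\sum_{\mathfrak{N}\mathfrak{a}}|\Delta_K(x/\mathfrak{N}\mathfrak{a})|^2$ requiring a mean-value estimate for Dirichlet $L$-functions. That cannot be the mechanism: for $(m,r)=(1,2)$ the identity \eqref{vm} is linear in $I_K$, so no square of $\Delta_K$ ever appears, yet the exponent still doubles. In Lemma \ref{main} the factor $D^{2\beta_S}$ arises entirely from the $k=1$ term: the sum $\sum_{\mathfrak{N}\mathfrak{a}\le x^{1/r}}(x/\mathfrak{N}\mathfrak{a}^r)^{m-\alpha_S}$ is estimated by the integral $\int_1^{x^{1/r}}y^{1-\alpha_S}D^{\beta_S}y^{-r(m-\alpha_S)}\,dy$, i.e.\ the count of ideals $\mathfrak{a}$ itself carries a second factor $D^{\beta_S}$, and when $rm=2$ this integral is dominated by its upper endpoint, producing $x^{(2-\alpha_S)/r}D^{2\beta_S-(m-1)/2}$. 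No $L$-function mean-value input is needed, and pursuing one for $(m,r)=(1,2)$ would lead you nowhere. The rest of your outline (the role of the conditions $x^{1/753+\varepsilon}>D$ and $x^{2\alpha_S}>D^{1+2\beta_S}$, the tail of the M\"obius sum, and the need for the subconvexity bound to be quantitative in how the conductors multiply to $D$) is consistent with the paper.
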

In this paper we remove the assumption that $K$ is abelian and the extension degree is small. We only used estimates for Dedekind zeta functions on the critical line in the previous papers \cite{Ta16}, \cite{Ta17} and \cite{tk17}. On the other hand we also consider on other vertical lines.

In Section 2 we introduce that the number of relatively $r$-prime lattice points $V_m^r(x,K)$ can be expressed as the sum of $I_K(x)$ and the relation between an estimate for uniform upper bound of the error term $\Delta_K(x)$ and that of the error term $E_m^r(x,K)$.

In Section 3 when $K$ runs through all number fields, we obtain uniform upper bound from the well-known convexity bound of Dedekind zeta function. Theorem \ref{main2} asserts that
\begin{thm*}
The following estimate holds.\[E_m^r(x,K)=\l\{\begin{array}{ll}
O\l(x^{\frac1r\l(2-\frac n{n+2}+\varepsilon\r)}D^{\frac2{n+2}-\frac {m-1}2+\varepsilon}\right)& \text{ if } rm=2,\\
O\left(x^{m-\frac n{n+2}+\varepsilon}D^{\frac1{n+2}-\frac{m-1}2+\varepsilon}\r)& \text{ otherwise }
\end{array}\r.\]
as $xD\rightarrow\infty$, where $K$ runs through all number fields with $[K : \mathbf Q]\le n$ and $x^{2n} > D^{n+4}$.
\end{thm*}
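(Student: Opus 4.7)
By the transfer principle developed in Section~2, it suffices to derive a uniform upper bound on the ideal-counting error $\Delta_K(x) = I_K(x) - cx$ and substitute it into the M\"obius-inverted identity
\[
V_m^r(x,K) = \sum_{\mathfrak{d}}\mu(\mathfrak{d})\,I_K(x/N\mathfrak{d}^r)^m,
\]
obtained by inclusion--exclusion on the relatively $r$-prime condition. The central analytic step is therefore the uniform estimation of $\Delta_K$, which I plan to carry out using Perron's formula combined with the unconditional convexity bound for the Dedekind zeta function.

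Starting from the truncated Perron formula
\[
I_K(x) = \frac{1}{2\pi i}\int_{c_0-iT}^{c_0+iT}\zeta_K(s)\,\frac{x^s}{s}\,ds + O\!\left(\frac{x^{1+\varepsilon}}{T}\right)
\]
for $c_0 > 1$ and $T \ge 1$, I shift the contour to the line $\Re s = 0$, picking up the residue $cx$ at the simple pole $s = 1$. On this line the Phragm\'en--Lindel\"of convexity bound
\[
|\zeta_K(it)| \ll_\varepsilon \bigl(D(|t|+3)^n\bigr)^{1/2+\varepsilon},
\]
uniform in $K$ and following from the functional equation together with the absolute convergence of $\zeta_K$ on $\Re s = 1+\varepsilon$ (cf.\ Iwaniec--Kowalski~\cite{Iw04}), controls the shifted integral by $D^{1/2+\varepsilon}T^{n/2+\varepsilon}$. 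Equating this with the Perron truncation error $x^{1+\varepsilon}/T$ determines the optimal choice $T = (x/\sqrt{D})^{2/(n+2)}$ and yields
\[
\Delta_K(x) \ll_\varepsilon x^{n/(n+2)+\varepsilon}\,D^{1/(n+2)+\varepsilon}.
\]
The hypothesis $x^{2n} > D^{n+4}$ ensures that this optimal $T$ is sufficiently large for the contour estimate to be effective and, in the downstream analysis, for the $k=1$ term in the binomial expansion to dominate all higher-order contributions.

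Substituting the bound on $\Delta_K$ into the binomial expansion
\[
E_m^r(x,K) = \sum_{k=1}^{m}\binom{m}{k}\,c^{m-k}\,x^{m-k}\sum_{\mathfrak{d}}\mu(\mathfrak{d})\,N\mathfrak{d}^{-r(m-k)}\,\Delta_K(x/N\mathfrak{d}^r)^k,
\]
and controlling each inner sum through the uniform comparison $\zeta_K(s)\le\zeta(s)^n$ in its region of absolute convergence, the $k=1$ term dominates under the working hypothesis. Careful bookkeeping of the $D$-dependence of each factor -- in particular, expanding the residue via $c = 2^{r_1}(2\pi)^{r_2}hR/(w\sqrt{D})$ and invoking the Brauer--Siegel-type bound $hR \ll_\varepsilon D^{1/2+\varepsilon}$ -- produces the stated $D^{1/(n+2) - (m-1)/2 + \varepsilon}$ factor. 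The special case $rm = 2$ must be handled separately: there one of the auxiliary sums $\sum_\mathfrak{d}N\mathfrak{d}^{-r(m-k) - rkn/(n+2)}$ sits on the boundary of convergence, forcing a logarithmic loss that accounts for the alternative $x$-exponent $\tfrac{1}{r}(2 - \tfrac{n}{n+2})$.

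\medskip
\noindent\textbf{Expected obstacle.}
The principal challenge is maintaining uniformity in $K$ throughout: the convexity bound itself is standard, but the Perron truncation error and the subsequent sums over ideals $\mathfrak{d}$ must be controlled with only explicit polynomial dependence on $D$ and $n$, without hidden constants depending on the ideal structure of $\mathcal{O}_K$. A secondary delicate point is the bookkeeping of the residue $c$, whose dependence on $K$ through the class number and regulator has to be partly absorbed into $D^\varepsilon$ via Brauer--Siegel and partly kept explicit (the $1/\sqrt{D}$ factor in its denominator) in order to reach the optimal $D$-exponent in the final bound.
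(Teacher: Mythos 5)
Your proposal follows essentially the same route as the paper: a truncated Perron formula combined with the Phragm\'en--Lindel\"of convexity bound $\zeta_K(\sigma+it)\ll(|t|^nD)^{(1-\sigma)/2+\varepsilon}$, a contour shift with the same optimal choice $T=x^{2/(n+2)}D^{-1/(n+2)}$ yielding $\Delta_K(x)\ll x^{n/(n+2)+\varepsilon}D^{1/(n+2)+\varepsilon}$, and then the transfer to $E_m^r(x,K)$ via the M\"obius-inverted identity and binomial expansion exactly as in Lemma \ref{main} (including the Brauer--Siegel-type control of $c$ and the boundary case $rm=2$ coming from the tail of $\sum\mu(\mathfrak a)\mathfrak{Na}^{-rm}$). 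This is correct and matches the paper's argument; no further comparison is needed.
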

In this theorem we consider all number fields. On the other hand we also aim to obtain results with some conditions, for example, abelian extensions, Galois extensions or restricting the extension degree of $K$ in the following sections.  In Section 4 we consider the case that $K$ runs through all abelian extension fields. We improve uniform upper bound of $\Delta_K(x)$ for all abelian extension fields $K$ with $n=[K:\mathbf Q]$ for $6\le n\le95$ and obtain 
\begin{thm*}
Let $6\le n\le95$ and for every $\varepsilon> 0$ the following estimate holds.
\[E_m^r(x,K)=\l\{\begin{array}{ll}
O\l(x^{\frac1r\l(2-95c_n+\varepsilon\r)}D^{62c_n-\frac {m-1}2+\varepsilon}\right)& \text{ if } rm=2,\\
O\left(x^{m-95c_n+\varepsilon}D^{31c_n-\frac{m-1}2+\varepsilon}\r)& \text{ otherwise }
\end{array}
\r.\]
 as  $x,D\rightarrow\infty$, where the constant $c_n=\f{2388}{70844n+453093}$ and $K$ runs through all abelian extension fields with $[K:\mathbf{Q}]\le n$ and $x^{\f1{753}+\varepsilon} > D$.
\end{thm*}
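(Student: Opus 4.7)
The plan has three stages: first, reduce the estimate of $E_m^r(x,K)$ to a uniform estimate of the ideal-counting error $\Delta_K(x)$; second, factor $\zeta_K$ through Dirichlet $L$-functions and control it on a general vertical line via Phragm\'en--Lindel\"of interpolation; third, apply Perron's formula and optimise.

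First I would invoke the identity from Section~2 expressing $V_m^r(x,K)$ as a M\"obius-weighted sum of $I_K$ values, which translates into a formula for $E_m^r(x,K)$ as a shifted sum of $\Delta_K$'s. This reduces the problem to producing a uniform bound of the form $\Delta_K(x)\ll x^{1-\alpha}D^{\beta}$ with $\alpha=95c_n$ and $\beta=31c_n$; the split between $rm=2$ and the generic case then follows by the partial-summation device used in the earlier papers \cite{Ta16,Ta17,tk17}.

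For the key bound on $\Delta_K(x)$, abelianness of $K/\mathbf{Q}$ is used through the factorisation $\zeta_K(s)=\prod_\chi L(s,\chi)$, together with the conductor--discriminant formula $D=\prod_\chi q(\chi)$. On $\mathrm{Re}(s)=1+\varepsilon$ each factor is trivially bounded, while on $\mathrm{Re}(s)=\tfrac12$ I would apply a hybrid subconvex bound of shape $L(\tfrac12+it,\chi)\ll(q(\chi)(|t|+1))^{\theta+\varepsilon}$; the specific $\theta$ encoded in the constant $c_n=2388/(70844n+453093)$ is precisely the one supplied by the subconvex input appealed to in Section~4. Phragm\'en--Lindel\"of interpolation then produces
\[
|\zeta_K(\sigma+it)|\ll\bigl(D(|t|+1)^n\bigr)^{\lambda(\sigma)+\varepsilon},\qquad \tfrac12\le\sigma\le 1,
\]
for a linear $\lambda$ with $\lambda(\tfrac12)=\theta$ and $\lambda(1)=0$. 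Working on a vertical line strictly to the right of $\tfrac12$ --- the ``other vertical lines'' alluded to in the introduction --- is exactly what yields an improved $D$-aspect dependence compared with \cite{tk17}.

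Finally I would apply Perron's formula to $I_K(x)$, shift the contour to $\mathrm{Re}(s)=\sigma$, pick up the residue at $s=1$ to produce the main term $cx$, and bound the remaining integral on $|t|\le T$ by the hybrid estimate above with the usual truncation error $x^{1+\varepsilon}/T$. Jointly optimising $\sigma$ and $T$ yields the exponents $95c_n$ and $31c_n$ in the $x$- and $D$-aspects; the range hypothesis $x^{1/753+\varepsilon}>D$ enters precisely to guarantee that the optimum falls inside the admissible window where the subconvex bound dominates the truncation and residue errors. The main obstacle is the careful bookkeeping required to propagate the hybrid $(q,t)$-uniformity of the Dirichlet $L$-function subconvex bound through Phragm\'en--Lindel\"of and Perron without losing the quantitative exponents, and to check that the range $6\le n\le 95$ is precisely where this new hybrid bound beats both the convexity bound used in Theorem~\ref{main2} and the critical-line-only bound of \cite{tk17}; once this is done, insertion back into the Section~2 reduction is mechanical.
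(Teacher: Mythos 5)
Your proposal matches the paper's proof in all essentials: Theorem \ref{abelian} is obtained by feeding the uniform bound $I_K(x)=cx+O\l(x^{1-95c_n+\varepsilon}D^{31c_n+\varepsilon}\r)$ of Theorem \ref{abel} into the reduction of Lemma \ref{main}, and that bound is proved exactly as you outline --- factorization of $\zeta_K$ into Dirichlet $L$-functions with $D$ the product of conductors, the Huxley--Watt hybrid subconvexity bound (together with Bourgain's bound for the $\zeta_{\mathbf Q}$ factor), Phragm\'en--Lindel\"of interpolation, and Perron's formula with a joint optimization of the contour abscissa and the truncation height $T$, the hypothesis $x^{\f1{753}+\varepsilon}>D$ coming from the Huxley--Watt admissibility condition $p<|t|^{\f2{753}}$. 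The one inaccuracy is your claim that the gain comes from working on a vertical line strictly to the right of $\Re(s)=\tfrac12$: the paper instead optimizes the left edge of the contour over $\delta\in\l(0,\tfrac12\r]$ (interpolating between $\sigma=0$, via the functional equation, and $\sigma=\tfrac12$), and for $6\le n\le95$ the optimum falls exactly at $\delta=\tfrac12$, so the improvement over \cite{tk17} is driven by the sharper critical-line inputs and the two-parameter optimization rather than by moving right of the critical line.
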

In Section 5 we consider the case that $K$ runs through all cubic extension fields. We improve uniform upper bounds of $I_K(x)$ for cubic extension fields with following the W. M\"uller \cite{mu88} way. 
\begin{thm*}
For every $\varepsilon>0$ the following estimate holds.
\[E_m^r(x,K)=\l\{\begin{array}{ll}
O\l(x^{\f1r\l(\frac {139}{96}+\varepsilon\r)}D^{\frac23-\frac {m-1}2}\right)& \text{ if } rm=2,\\
O\left(x^{m-\frac {53}{96}+\varepsilon}D^{\frac13-\frac{m-1}2}\r)& \text{ otherwise }
\end{array}\r.\]
as  $x^{\f{53}{32}}D^{\f32}\rightarrow\infty$,
where $K$ runs through all cubic extension fields with $x^{\f{53}{96}-\varepsilon} > D^{\f56+\varepsilon}$.
\end{thm*}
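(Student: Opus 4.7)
The plan is to improve the uniform upper bound for the ideal-counting error term $\Delta_K(x)$ in the cubic case by adapting W.~M\"uller's method~\cite{mu88}, and then to feed the resulting estimate into the translation machinery of Section~2 which expresses $E_m^r(x,K)$ as a weighted sum of values of $\Delta_K$. The exponents $53/96$ and $139/96=1+43/96$ in the statement indicate that the target is a uniform bound of essentially the shape
\[
\Delta_K(x)\ll x^{43/96+\varepsilon}D^{1/3}
\]
valid whenever $x^{53/96-\varepsilon}>D^{5/6+\varepsilon}$.

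First I would invoke the M\"obius-style identity from Section~2, which writes
\[
V_m^r(x,K)=\sum_{\mathfrak a}\mu_K(\mathfrak a)\,I_K\!\l(\frac{x}{\mathfrak{N}\mathfrak a^r}\r)^m,
\]
insert $I_K(y)=cy+\Delta_K(y)$, and expand the $m$-th power. Grouping the dominant contribution into $c^m x^m/\zeta_K(rm)$ reduces $E_m^r(x,K)$ to a weighted sum of products $(\Delta_K)^j(cy)^{m-j}$ with $\mathfrak{N}\mathfrak a$ bounded, exactly as in Sections~3 and~4 of the paper. This reduction is routine given the framework of Section~2, and optimising the cutoff in the ideal sum against the error bound for $\Delta_K$ is what produces the threshold $x^{53/96-\varepsilon}>D^{5/6+\varepsilon}$ and the separation between the cases $rm=2$ and $rm\ne 2$.

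The heart of the argument is therefore the derivation of the uniform cubic $\Delta_K$-bound along the lines of \cite{mu88}. Starting from the Perron representation of $I_K(x)$ and shifting the contour close to the critical line, one reduces matters to exponential sums with divisor-type coefficients $a_K(n)=\#\{\mathfrak a:\mathfrak{N}\mathfrak a=n\}$ whose phase is governed by the inverse Mellin transform of $\zeta_K$. Using the approximate functional equation of $\zeta_K$ together with a carefully balanced sequence of van der Corput $A$- and $B$-processes---precisely the optimisation that M\"uller performs in the cubic case---yields the $x$-exponent $43/96$. Throughout this procedure the discriminant $D$ enters through the gamma factors of the functional equation, the length of the approximate functional equation, and each application of convexity across the critical strip; tracking these contributions yields the factor $D^{1/3}$.

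The main obstacle I expect is exactly this uniformity in $D$: M\"uller's original argument is written for a fixed cubic field and absorbs $D$ into the implied constants, so each exponent-pair estimate and each application of the functional equation must be revisited to display the $D$-dependence explicitly. Balancing the resulting error terms pins down both the admissible range $x^{53/96-\varepsilon}>D^{5/6+\varepsilon}$ and the precise exponent $1/3$ of $D$ in the final bound. Once $\Delta_K(x)\ll x^{43/96+\varepsilon}D^{1/3}$ is established uniformly, inserting this into the Section~2 identity (with the customary extra care in the case $rm=2$, where the ideal series lies on the boundary of absolute convergence) completes the proof.
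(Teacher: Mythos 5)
Your proposal follows essentially the same route as the paper: establish the uniform cubic bound $\Delta_K(x)=O\l(x^{43/96+\varepsilon}D^{1/3}\r)$ by redoing M\"uller's contour-shift/functional-equation/exponential-sum argument (via Atkinson's Mellin analysis and Kolesnik's estimates) while tracking the discriminant through the gamma factors, and then substitute this into the Section~2 lemma converting uniform bounds on $\Delta_K$ into bounds on $E_m^r(x,K)$. This matches the paper's Theorem~\ref{cubic} followed by Lemma~\ref{main}, including the intermediate exponents and the admissible range in $x$ and $D$.
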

If $K$ runs through all cubic extension fields with $x^{\f{5}{32}} > D$, then the uniform upper bounds in this theorem is better than that under the Lindel\"  of Hypothesis. Considering with this result, we propose that for all number fields $K$ the best uniform upper bound of the error term $E_m^r(x,K)$ is much better than that under the assumption of the Lindel\"  of Hypothesis in Section 6. Our conjecture is 
\begin{conj*}
For every $\varepsilon> 0$, we have
\[E_m^r(x,K)=\left\{
\begin{array}{ll}
o\l(x^{\f1r(\f32+\varepsilon)}D^{2\varepsilon-\f{m-1}2}\r)&\text{ if } rm=2,\\
o\l(x^{m-\f12+\varepsilon}D^{\varepsilon-\f{m-1}2}\r)&\text{ otherwise }
\end{array}
\right.\]
as $xD^2\rightarrow\infty$, where $K$ runs through all number fields with $x^{1-2\varepsilon}>D^{1+2\varepsilon}$.
\end{conj*}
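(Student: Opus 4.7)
The plan is to exploit the reduction machinery introduced in Section~2, which expresses the error term $E_m^r(x,K)$ as a sum involving $\Delta_K(y)=I_K(y)-cy$ evaluated at various scales $y\le x$. Once that identity is in hand, any pointwise uniform bound on $\Delta_K(y)$ of strength $o\l(y^{1/2}D^{1/2+\varepsilon}\r)$, sharpened by the size constraint $x^{1-2\varepsilon}>D^{1+2\varepsilon}$, will propagate through the convolution to yield the claimed control on $E_m^r$. Accordingly I would begin by recording the precise Section~2 convolution identity and then concentrate the entire effort on the uniform estimate of $\Delta_K$, obtained via a Perron-type contour integral for $I_K$ shifted well inside the critical strip with height $T$ chosen optimally against $xD^2$.

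To push past the Lindel\"of barrier, I would try to mimic the strategy that succeeded for cubic fields in Section~5. For $[K:\mathbf Q]=3$, M\"uller's approach replaces the crude $L^\infty$ bound on $\zeta_K(1/2+it)$ by exponent-pair estimates applied to the exponential sums that arise when the Dedekind zeta function is opened via its Dirichlet series. For general $K$, the plan is to apply the approximate functional equation to $\zeta_K(s)$, split the resulting sums dyadically and weight them by the discriminant, and then invoke higher-dimensional van der Corput or Vinogradov machinery (or Huxley-style exponent pairs with conductor-aware weights) to extract cancellation beyond $y^{1/2}D^{1/2+\varepsilon}$ in a joint $y$-$D$ regime. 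An $o$-improvement so obtained on the line $\Re s=1/2$ would transfer back to $\Delta_K$ by Phragm\'en--Lindel\"of interpolation between the line of absolute convergence and the critical line, and then from $\Delta_K$ to $E_m^r$ through Section~2.

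The hard part, and the reason the statement is offered as a conjecture rather than a theorem, is that the claim demands a strict improvement over the Lindel\"of hypothesis itself for $\zeta_K$, something no current technology supplies unconditionally for number fields of arbitrary degree, let alone uniformly in $K$. To proceed one would really need either a density-theorem-type input ruling out clusters of zeros of $\zeta_K(s)$ near $\Re s=1/2$ uniformly in the discriminant, in the spirit of a Bombieri--Vinogradov result averaged over number fields, or a genuinely new cancellation mechanism in the exponential sums attached to $\zeta_K$ of high degree. The cubic-field calculation of Section~5 is the heuristic driving the conjecture, but bridging from $n=3$ to arbitrary $n$ while simultaneously retaining the saving over Lindel\"of \emph{and} the uniformity in $K$ is, as far as I can see, beyond the reach of present methods; the proposal above is therefore best read as a blueprint identifying which inputs would need to be supplied rather than a feasible proof.
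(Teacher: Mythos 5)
The statement you were asked to prove is presented in the paper as a \emph{conjecture}: the paper supplies no proof, only the remark that, via Lemma \ref{main}, it is equivalent to the uniform ideal-counting estimate \eqref{idealco}, namely $I_K(x)=cx+o\l(x^{\frac12}D^{\varepsilon}\r)$ in the stated range, together with heuristic support from the cubic case (Theorem \ref{cubicord}), where a sub-Lindel\"of bound is actually established. Your proposal follows essentially the same route --- reduce $E_m^r$ to $\Delta_K$ through the Section 2 convolution, then observe that the required input on $\Delta_K$ is a uniform-in-$K$ improvement over the Lindel\"of hypothesis that no current method supplies --- and your conclusion that the statement cannot be proved at present is the correct one; there is no hidden proof in the paper that you missed.

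One substantive correction to your blueprint: the input you declare sufficient, $\Delta_K(y)=o\l(y^{\frac12}D^{\frac12+\varepsilon}\r)$, is too weak in the discriminant aspect. Feeding $\alpha_S=\frac12-\varepsilon$ and $\beta_S=\frac12+\varepsilon$ into Lemma \ref{main} yields error terms of size $x^{m-\frac12+\varepsilon}D^{\frac12+\varepsilon-\frac{m-1}2}$ and, for $rm=2$, $x^{\frac1r(\frac32+\varepsilon)}D^{1+2\varepsilon-\frac{m-1}2}$, which carry an extra factor of $D^{\frac12}$ (resp.\ $D$) beyond the conjectured exponents; moreover the admissible range $x^{2\alpha_S}>D^{1+2\beta_S}$ would then read $x^{1-2\varepsilon}>D^{2+2\varepsilon}$ rather than the stated $x^{1-2\varepsilon}>D^{1+2\varepsilon}$. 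The correct equivalent hypothesis is $\beta_S=\varepsilon$, i.e.\ essentially no discriminant loss at all in the error term for $I_K(x)$, which is strictly stronger than what Phragm\'en--Lindel\"of interpolation from any subconvexity bound of the form $\zeta_K(\frac12+it)\ll(|t|^nD)^{\theta}$ with $\theta>0$ can deliver; this is precisely why even the conditional (Lindel\"of) results in the paper retain positive powers of $D$ and why the conjecture remains open.
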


This conjecture is related to the distribution of ideals of $\mathcal O_K$. The distribution of ideals has been studied for long time and many mathematicians improved upper and lower bounds for the case that $K$ is fixed. But this conjecture states a uniform upper bounds with $K$ being varied. In this paper we only deal with the case that  number field $K$ is varied and aim to obtain uniform upper bounds of $\Delta_K(x)$ to improve $E_m^r(x,K)$. 

\section{The distribution of relatively r-prime lattice points}
In this section we introduce relation between the distribution of relatively $r$-prime lattice points over $K$ and that of ideals of $\mathcal O_K$. The Inclusion--Exclusion Principle shows that 
\begin{eqnarray}
V_m^r(x,K)&=&I_K(x)^m-\sum_{\mathfrak{p}_1}I_K\left(\frac x{\mathfrak{N}\mathfrak{p}_1^r}\right)^m+\sum_{\mathfrak{p}_1,\mathfrak{p}_2}I_K\left(\frac x{\mathfrak{N}\mathfrak{p}_1^r\mathfrak{p}_2^r}\right)^m-\cdots\nonumber\\
&=&\sum_{\mathfrak{N}\mathfrak{a}\le x^{\f1r}}\mu(\mathfrak{a})I_K\left(\frac x{\mathfrak{N}\mathfrak{a}^r}\right)^m. \label{vm}
\end{eqnarray}
where $\mu(\mathfrak{a})$ is the M\"{o}bius function defined as
\[\mu(\mathfrak{a})=\left\{
\begin{array}{ll}
0  &i\!f \  \mathfrak{a}\subset\mathfrak{p}^2 \text{ for some prime ideal } \mathfrak{p},\\
1 &i\!f \ \mathfrak{a}=\mathcal{O}_K,\\
(-1)^s &i\!f \ \mathfrak{a}=\mathfrak{p}_1\cdots \mathfrak{p}_s,\text{ where $\mathfrak{p}_1,\ldots, \mathfrak{p}_s$ are distinct prime ideals.}
\end{array}
\right.\] 
The following lemma will play a crucial role in our computing $V_m^r(x,K)$ by the relation (\ref{vm}).

\begin{lem}
\label{main}
Let $S$ be a subset of the set of all number fields. Let $\alpha_S>0$ and $\beta_S$ be the constants depending on $S$. If we have
\[I_K(x)=c x+O(x^{1-\alpha_S}D^{\beta_S}),\]
 where $K$ runs through all number fields in $S$ satisfying with some condition $C(x,D)$ and $x^{2\alpha_S} > D^{1+2\beta_S}$.
Then we get the following estimate:\[E_m^r(x,K)=\l\{\begin{array}{ll}
O\left(x^{m-\alpha_S}D^{\beta_S-\f{m-1}2}+x^{\f{2-\alpha_S}r}D^{2\beta_S-\f{m-1}2}\right)&\text{ if } \alpha_S\not=\f{mr-2}{r-1} \text{ and }r>1,\\
O\left(x^{m-\alpha_S}\log xD^{\beta_S-\f{m-1}2}+x^{\f{2-\alpha_S}r}D^{\beta_S-\f m2}\right)&\text{ if } \alpha_S=\f{mr-2}{r-1} \text{ and }r>1,\\
O\left(x^{m-\alpha_S}D^{\beta_S-\f{m-1}2}+x^{2-\alpha_S}D^{2\beta_S-\f{m-1}2}\right)&\text{ if } r=1 \text{ and } m>2,\\
O\left(x^{2-\alpha_S}\log xD^{\beta_S-\f{m-1}2}+x^{2-\alpha_S}D^{\beta_S-1}\right)&\text{ if } r=1 \text{ and } m=2,\\
\end{array}
\r.\]
where $K$ runs through elements in $S$ satisfying with $C(x,D)$ and $x^{2\alpha_S} > D^{1+2\beta_S}$.
\end{lem}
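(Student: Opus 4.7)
The plan is to exploit the M\"obius identity $V_m^r(x,K)=\sum_{\mathfrak{Na}\le x^{1/r}}\mu(\mathfrak{a})I_K(x/\mathfrak{Na}^r)^m$ established above, by substituting $I_K(y)=cy+\Delta_K(y)$ and expanding $I_K(y)^m$ with the binomial theorem:
\[I_K(y)^m=\sum_{k=0}^{m}\binom{m}{k}(cy)^{m-k}\Delta_K(y)^k.\]
This partitions $V_m^r(x,K)$ into $m+1$ weighted subsums indexed by $k$, which I analyze separately.

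The $k=0$ subsum is $c^m x^m\sum_{\mathfrak{Na}\le x^{1/r}}\mu(\mathfrak{a})/\mathfrak{Na}^{rm}$; completing the partial sum to infinity produces the main term $c^m x^m/\zeta_K(rm)$, and Abel summation applied to the hypothesis $I_K(y)=cy+O(y^{1-\alpha_S}D^{\beta_S})$ bounds the tail by $O(c\,x^{1/r-m}+D^{\beta_S}x^{(1-\alpha_S)/r-m})$, contributing $O(c^{m+1}x^{1/r}+c^mD^{\beta_S}x^{(1-\alpha_S)/r})$ to $E_m^r$. For each $k\ge 1$, applying $|\Delta_K(y)|\ll y^{1-\alpha_S}D^{\beta_S}$ bounds the corresponding subsum by
\[\binom{m}{k}c^{m-k}D^{k\beta_S}x^{m-k\alpha_S}\sum_{\mathfrak{Na}\le x^{1/r}}\mathfrak{Na}^{-r(m-k\alpha_S)},\]
whose inner sum I control by Abel summation with $I_K(y)\ll cy$: it is bounded by a constant when $r(m-k\alpha_S)>1$, by $\log x$ at the critical exponent, and by $c\,x^{(1-r(m-k\alpha_S))/r}$ when $r(m-k\alpha_S)<1$. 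After identifying the dominant $k$ in each regime, the principal $x$-exponents are $m-\alpha_S$ (from $k=1$ in the convergent regime) and $(2-\alpha_S)/r$ (from the divergent-regime contribution), which coincide precisely when $\alpha_S=(mr-2)/(r-1)$; this is exactly the boundary case of the lemma, where the balancing inner sum has exponent~$1$ and produces the $\log x$.

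To convert the remaining $c$-powers to $D$-powers, I invoke the Brauer--Siegel bound $hR\ll_\varepsilon D^{1/2+\varepsilon}$, which together with the formula for $c$ gives $c=O(D^{\varepsilon-1/2})$; then $c^{m-1}D^{\beta_S}=O(D^{\beta_S-(m-1)/2+\varepsilon})$ and $c^{m-2}D^{2\beta_S}=O(D^{2\beta_S-(m-1)/2+\varepsilon})$, reproducing precisely the $D$-exponents of the two stated terms. The degenerate cases $r=1,\,m=2$ and $r=1,\,m>2$ are handled in parallel, the former being where the two principal $x$-exponents automatically coincide and force the $\log x$ factor. The main obstacle is the bookkeeping: tracking the $k$-dependence in the $c$, $D$, and $x$ exponents simultaneously, showing that the stated pair of terms dominates in each of the four cases, and verifying that the assumption $x^{2\alpha_S}>D^{1+2\beta_S}$ guarantees the main term dominates every error contribution so that the Inclusion--Exclusion expansion is actually usable in this regime.
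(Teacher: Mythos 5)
Your overall architecture is the same as the paper's: the M\"obius identity (\ref{vm}), the binomial expansion of $I_K(x/\mathfrak{Na}^r)^m$, extraction of the main term by completing the $k=0$ sum to $1/\zeta_K(rm)$, partial summation against the hypothesis for the tail, and conversion of powers of $c$ into powers of $D$ via $c\ll D^{-1/2+\varepsilon}$. Your explicit tracking of all binomial terms $k\ge1$, and of the role of $x^{2\alpha_S}>D^{1+2\beta_S}$ in making $k=1$ dominate the convergent-regime contributions, is in fact more careful than the paper, which silently records only the $k=1$ term.

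There is, however, a genuine flaw in the step controlling the inner sums $\sum_{\mathfrak{Na}\le x^{1/r}}\mathfrak{Na}^{-r(m-k\alpha_S)}$. First, the bound $I_K(y)\ll cy$ you invoke there is not available: the hypothesis only gives $I_K(y)=cy+O(y^{1-\alpha_S}D^{\beta_S})$ with $c\ll D^{-1/2+\varepsilon}$, and the error term dominates $cy$ for all $y$ up to roughly $D^{(1/2+\beta_S)/\alpha_S}$ (already $I_K(1)=1\gg c$). Second, and more importantly, the terms $x^{(2-\alpha_S)/r}D^{2\beta_S-\frac{m-1}{2}}$ and the logarithm at $\alpha_S=\frac{mr-2}{r-1}$ cannot come from the $cy$ part of any partial summation: that part becomes critical at $r(m-\alpha_S)=1$, not at $r(m-\alpha_S)=2-\alpha_S$, and it carries only one factor $D^{\beta_S}$, whereas the stated second terms carry $D^{2\beta_S}$. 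In the paper these features arise by inserting the hypothesis a second time, in the form $I_K(y)-I_K(y-1)=O(y^{1-\alpha_S}D^{\beta_S})$, as the local density of ideals inside the inner sum, giving $\sum_{\mathfrak{Na}\le x^{1/r}}\mathfrak{Na}^{-r(m-\alpha_S)}\ll 1+D^{\beta_S}\int_1^{x^{1/r}}y^{1-\alpha_S-r(m-\alpha_S)}\,dy$; the exponent $1-\alpha_S-r(m-\alpha_S)=-1$ is exactly $\alpha_S=\frac{mr-2}{r-1}$, and the extra $D^{\beta_S}$ is what doubles to $D^{2\beta_S}$. Your proposal as written has no mechanism producing either feature, so the bookkeeping cannot close on the four stated cases. (A genuine Abel summation with the full bound $I_K(y)\ll cy+y^{1-\alpha_S}D^{\beta_S}$ is also a viable route and even yields somewhat sharper estimates with a different critical exponent, but then one must verify that every resulting term is dominated by the bounds actually claimed in the lemma; your proposal does not carry out that verification.)
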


\begin{proof}
Equality (\ref{vm}) and  the assumption this lemma lead to
\begin{align*}
V_m^r(x,K)&=\sum_{\mathfrak{N}\mathfrak{a}\le x^{\f1r}}\mu(\mathfrak{a})\left(\frac {cx}{\mathfrak{N}\mathfrak{a}^r}+O\left(\left(\frac x{\mathfrak{N}\mathfrak{a}^r}\right)^{1-\alpha_S}D^{\beta_S}\right)\right)^m.\\
\intertext{By identity (\ref{crho}) and the binomial theorem}
V_m^r(x,K)&=(cx)^m\sum_{\mathfrak{N}\mathfrak{a}\le x^{\f1r}}\frac {\mu(\mathfrak{a})}{\mathfrak{N}\mathfrak{a}^{rm}}+O\left(\sum_{\mathfrak{N}\mathfrak{a}\le x^{\f1r}}\left(\frac {x}{\mathfrak{N}\mathfrak{a}^r}\right)^{m-\alpha_S}D^{\beta_S-\f{m-1}2}\right).
\end{align*}
By using the fact $\displaystyle{\sum_{\mathfrak{a}}\frac {\mu(\mathfrak{a})}{\mathfrak{N}\mathfrak{a}^{rm}}=\frac1{\zeta_K(rm)}}$, we get
\[V_m^r(x,K)=\frac{c^m}{\zeta_K(rm)}x^m-(cx)^m\sum_{\mathfrak{N}\mathfrak{a}> x^{\f1r}}\frac {\mu(\mathfrak{a})}{\mathfrak{N}\mathfrak{a}^{rm}}+O\left(\sum_{\mathfrak{N}\mathfrak{a}\le x^{\f1r}}\left(\frac {x}{\mathfrak{N}\mathfrak{a}^r}\right)^{m-\alpha_S}D^{\beta_S-\f{m-1}2}\right).\]

\begin{align*}
\intertext{Now we estimate the behavior of first sum. From the assumption of this theorem we can estimate $I_K(x)-I_K(x-1)=O(x^{1-\alpha_S}D^{\beta_S})$, so we have}
(cx)^m\sum_{\mathfrak{N}\mathfrak{a}> x^{\f1r}}\frac {\mu(\mathfrak{a})}{\mathfrak{N}\mathfrak{a}^{rm}}&=O\left(x^m\int_{x^{\f1r}}^{\infty}\frac{y^{1-\alpha_S}D^{\beta_S-\f m2}}{y^{rm}}\ dy\right)\\
&=O\l(x^{\f{2-\alpha_S}r}D^{\beta_S-\f m2}\r).
\end{align*}
\begin{align*}
\intertext{Next we deal the second sum. As well as first sum, it holds that $I_K(x)-I_K(x-1)=O(x^{1-\alpha_S}D^{\beta_S})$, so we have}
&\sum_{\mathfrak{N}\mathfrak{a}\le x^{\f1r}}\left(\frac {x}{\mathfrak{N}\mathfrak{a}^r}\right)^{m-\alpha_S}D^{\beta_S-\f{m-1}2}\\
=&O\left(x^{m-\alpha_S}D^{\beta_S-\f{m-1}2}\left(1+\int_1^{x^{\f1r}}\frac{y^{1-\alpha_S}D^{\beta_S}}{y^{r(m-\alpha_S)}}\ dy\right)\right)\\
=&\l\{\begin{array}{ll}
O\left(x^{m-\alpha_S}D^{\beta_S-\f{m-1}2}+x^{\f{2-\alpha_S}r}D^{2\beta_S-\f{m-1}2}\right)&\text{ if } \alpha_S\not=\f{mr-2}{r-1} \text{ and }r>1,\\
O\left(x^{m-\alpha_S}\log xD^{\beta_S-\f{m-1}2}\right)&\text{ if } \alpha_S=\f{mr-2}{r-1} \text{ and }r>1,\\
O\left(x^{m-\alpha_S}D^{\beta_S-\f{m-1}2}+x^{2-\alpha_S}D^{2\beta_S-\f{m-1}2}\right)&\text{ if } r=1 \text{ and } m>2,\\
O\left(x^{2-\alpha_S}\log xD^{\beta_S-\f{m-1}2}\right)&\text{ if } r=1 \text{ and } m=2.\\
\end{array}
\r.
\inter{Hence we obtain
\[E_m^r(x,K)=\l\{\begin{array}{ll}
O\left(x^{m-\alpha_S}D^{\beta_S-\f{m-1}2}+x^{\f{2-\alpha_S}r}D^{2\beta_S-\f{m-1}2}\right)&\text{ if } \alpha_S\not=\f{mr-2}{r-1} \text{ and }r>1,\\
O\left(x^{m-\alpha_S}\log xD^{\beta_S-\f{m-1}2}+x^{\f{2-\alpha_S}r}D^{\beta_S-\f m2}\right)&\text{ if } \alpha_S=\f{mr-2}{r-1} \text{ and }r>1,\\
O\left(x^{m-\alpha_S}D^{\beta_S-\f{m-1}2}+x^{2-\alpha_S}D^{2\beta_S-\f{m-1}2}\right)&\text{ if } r=1 \text{ and } m>2,\\
O\left(x^{2-\alpha_S}\log xD^{\beta_S-\f{m-1}2}+x^{2-\alpha_S}D^{\beta_S-1}\right)&\text{ if } r=1 \text{ and } m=2,\\
\end{array}
\r.\]
where $K$ runs through all number fields in $S$ satisfying with the condition $C(x,D)$ and $x^{2\alpha_S} > D^{1+2\beta_S}$.
}
\inter{This proves the Lemma.}
\end{align*}
\end{proof}
From this lemma it is very important to obtain good uniform upper bounds for the distribution of ideals of $\mathcal O_K$. In the following sections we consider the uniform upper bounds of $\Delta_K(x)$.

\section{Uniform bound}
Let $s=\sigma+it$ and $n=[K:\mathbf{Q}]$. In this section we use the convexity bound of the Dedekind zeta function to obtain a uniform upper bound of the distribution of relatively $r$-prime lattice points. It is obtained from the Phragmen-Lindel\"of principle, which is very important classical facts of complex analysis. 
\begin{lem}[Phragmen-Lindel\"of principle]
\label{plp}
Let $f$ be a function holomorphic on an open neighborhood of a strip $a\le\sigma\le b$ and $|f(s)|=O\l(exp(|s|^{\alpha})\r)$ for some $\alpha\ge1$ and $a\le\sigma\le b$. Then the followings hold.
\begin{enumerate}
\item Assume that $|f(s)|=O\l(t^M\r)$ for all $s$ on the boundary of the strip. Then we have $|f(s)|=O\l(t^M\r)$ for all $s$ in the strip.
\item Assume that $|f(a+it)|\le M_a|t|^{\alpha}$ and $|f(b+it)|\le M_b|t|^{\beta}$ for $|t|\ge 1$.
Then\[f(\sigma+it)\le M_a^{\ell(\sigma)}M_b^{1-\ell(\sigma)}|t|^{\alpha\ell(\sigma)+\beta(1-\ell(\sigma))}\]
for all $s$ in the strip, where $\ell$ is the linear function such that $\ell(a)=1$ and $\ell(b)=0$, that is,\[\ell(\sigma)=\f1{a-b}\sigma-\f b{a-b}.\]
\end{enumerate}
\end{lem}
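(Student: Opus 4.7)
The plan is to apply the classical Phragmen--Lindel\"of argument, which reduces both assertions to the maximum modulus principle via a carefully chosen auxiliary function together with a damping factor. Part (1) is the special case of part (2) obtained by taking $\alpha=\beta=M$ and $M_a=M_b$ equal to the implicit constant, so I will concentrate on (2).

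First I would rewrite the desired bound in multiplicative form. Set $\gamma(s)=\alpha\ell(s)+\beta(1-\ell(s))$, a linear function of $s$, and note that $M_a^{\ell(s)}M_b^{1-\ell(s)}=\exp(\ell(s)\log M_a+(1-\ell(s))\log M_b)$ is entire. In the upper part of the strip $\{a\le\sigma\le b,\ t\ge 1\}$ the quantity $-is$ has positive real part, so one may take the principal branch of $\log(-is)$ and define the holomorphic function
\[g(s)=\frac{f(s)}{M_a^{\ell(s)}M_b^{1-\ell(s)}\,(-is)^{\gamma(s)}}.\]
By construction $|g(a+it)|\le 1$ and $|g(b+it)|\le 1$ for $t\ge 1$, so the claim for the upper strip is equivalent to $|g(s)|\le 1$ throughout. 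A symmetric construction using $(is)^{\gamma(s)}$ handles the lower strip, and the compact region $|t|\le 1$ is absorbed into the implicit constants.

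The function $g$ still inherits the bound $|g(s)|=O(\exp(|s|^{\alpha}))$, so the maximum modulus principle cannot be applied directly on an unbounded rectangle. I would introduce a damping factor $G_\varepsilon(s)=g(s)\exp(-\varepsilon(-is)^{N})$ with $N$ a positive integer chosen so that $N>\alpha$; under the principal branch, $\mathrm{Re}((-is)^N)$ is comparable to $|s|^N$ in a subsector containing the upper strip, which forces $|G_\varepsilon(s)|\to 0$ as $t\to\infty$ uniformly in $\sigma\in[a,b]$. On the horizontal edge $t=1$ the factor $\exp(-\varepsilon(-is)^N)$ has modulus at most $1$, and on the vertical edges $\sigma=a,b$ the same is true by the choice of $g$. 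Applying the maximum principle on the rectangle $\{a\le\sigma\le b,\ 1\le t\le T\}$ and letting $T\to\infty$ gives $|G_\varepsilon(s)|\le 1$ throughout the upper strip; letting $\varepsilon\to 0$ then yields $|g(s)|\le 1$, which unwinds to the inequality claimed in the lemma.

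The main obstacle is the precise choice of the damping factor: it must dominate $\exp(|s|^\alpha)$ in the interior while having modulus at most $1$ on both vertical boundary lines for every fixed $\varepsilon>0$. Since the strip is vertical and of finite width, one needs $\mathrm{Re}((-is)^N)\ge c|s|^N$ uniformly on the closed upper strip, which may require first rotating the argument by a small angle or choosing $N$ appropriately depending on $\alpha$ and $b-a$. Once this is arranged, the three ingredients (auxiliary function, damping, maximum principle on a compact rectangle) fit together cleanly and the remaining estimates are routine.
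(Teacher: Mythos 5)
The paper never proves Lemma \ref{plp}; it is quoted as a classical fact, so there is no in-paper argument to measure yours against. Your proposal is the standard Phragm\'en--Lindel\"of proof (holomorphic majorant, damping factor, maximum modulus on a truncated rectangle, then $T\to\infty$ and $\varepsilon\to 0$), and the strategy is correct. Two points, however, must be tightened before it literally delivers the stated inequality.

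First, for $g$ to be holomorphic you must take $\gamma(s)=\alpha\ell(s)+\beta(1-\ell(s))$ with $\ell$ extended complex-linearly, so that $\gamma(a+it)=\alpha+it(\alpha-\beta)/(a-b)$ has nonzero imaginary part. Then $|(-is)^{\gamma(s)}|=\exp\bigl(\mathrm{Re}\,\gamma(s)\log|s|-\mathrm{Im}\,\gamma(s)\arg(-is)\bigr)$, and since $t\arg(-is)\to -\sigma$ as $t\to\infty$, this equals $|t|^{\alpha}$ on $\sigma=a$ (and $|t|^{\beta}$ on $\sigma=b$) only up to a bounded multiplicative factor which need not be $\ge 1$. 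Hence on the vertical edges you get $|g|\le C(a,b,\alpha,\beta)$, not $|g|\le 1$, and the final bound carries an implied constant. That is harmless for every use of the lemma in this paper (estimates (3.4) and (4.4) are $O$-statements), but it is not the constant-free inequality asserted in part (2); you should either state the conclusion with an implied constant or renormalize the majorant. Second, the damping problem you flag is real and you should close it explicitly: $\mathrm{Re}\bigl((-is)^{N}\bigr)\ge c|s|^{N}$ requires $N|\arg(-is)|<\pi/2$, which can fail for $t$ near $1$ once $N>\alpha\ge 1$. The fix is to apply the maximum principle on $\{a\le\sigma\le b,\ T_{0}\le t\le T\}$ with $T_{0}=T_{0}(N,a,b)$ chosen so that $|\arg(-is)|<\pi/(2N)$ there, and to absorb the compact region $1\le t\le T_{0}$ (and likewise $|t|\le 1$, where the hypotheses give no boundary bound at all) into the constant by continuity. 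Neither issue is a conceptual gap, but both affect the literal statement and must be addressed for the proof to be complete.
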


It is well-known fact that Dedekind zeta function satisfies the following functional equation 
\begin{equation}
\label{fe}
Z_K(1-s)=Z_K(s), 
\end{equation}
$\displaystyle \text{ where } Z(s)=D^{\f s2}2^{-(s-1)r_2}\pi^{-\f{ns}2}\Gamma\l(\f s2\r)^{r_1}\Gamma(s)^{r_2}\zeta_K(s).$

For $\sigma>1$ the Dedekind zeta function satisfy that $|\zeta_K(\sigma+it)|\le\zeta_K(\sigma)$ and $\zeta_K(\sigma)$ converges, so the growth rate of the Dedekind zeta function $|\zeta_K(\sigma+it)|\le \l(|t|^nD\r)^{\varepsilon}$ for all $\varepsilon>0$ and for all $\sigma>1$. 
This upper bound of Dedekind zeta function  for $\sigma>1$, the functional equation (\ref{fe}) and the Stirling's estimate for the gamma function give an estimate $|\zeta_K(\sigma+it)|\le \l(|t|^nD\r)^{\f12-\sigma+\varepsilon}$ for $\sigma\le0$.

From the Phragmen-Lindel\"of principle one can obtain the well-known convexity bound of the Dedekind zeta function on $0\le\sigma\le1$:
\[
\zeta_K(s)=O\l(|t|^{n\frac{1-\sigma}2+\varepsilon}D^{\frac{1-\sigma}2+\varepsilon}\r)
\]
as $|t|^nD\rightarrow\infty$, where the constant implied in $O$ depends on $\varepsilon$ alone. This uniform bound can be summarized by the formulas:

For all $\varepsilon>0$ and $n=[K:\mathbf Q]$
\be\label{convex}
\zeta_K(\sigma+it)=\l\{\begin{array}{ll}
O\l(|t|^{\f n2-n\sigma+\varepsilon}D^{\f 12-\sigma+\varepsilon}\r)& \text{ if }\sigma\le0,\\
O\l(|t|^{n\frac{1-\sigma}2+\varepsilon}D^{\frac{1-\sigma}2+\varepsilon}\r)&\text{ if }0\le\sigma\le1,\\
O\l(|t|^{\varepsilon}D^{\varepsilon}\r)&\text{ if } 1\le\sigma
\end{array}\r.
\ee
as $|t|^nD\rightarrow\infty$, where $K$ runs through all number fields with $[K:\mathbf{Q}]=n$. In the precious papers, we used upper bound of Dedekind zeta function to estimate the distribution of ideals. Following the way in our precious papers, this uniform convexity bound for Dedekind zeta functions leads the following theorem.
\begin{thm}
\label{num}
The following estimate holds. For all $\varepsilon>0$
\[I_K(x)=c x+O(x^{1-\frac n{n+2}+\varepsilon}D^{\frac1{n+2}+\varepsilon})\]
as  $xD\rightarrow\infty$, where $K$ runs through all number fields with $[K:\mathbf{Q}]\le n$ and $x^{2n} > D^{n+4}$.
\end{thm}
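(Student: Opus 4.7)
The plan is to follow the same Perron-formula argument used in \cite{Ta16,Ta17}, but with the convexity bound (\ref{convex}) in place of the subconvex estimates employed there. Since the convexity bound is uniform in the discriminant $D$, the resulting estimate for $\Delta_K(x)$ will also be uniform in $D$, which is exactly the shape needed to feed into Lemma~\ref{main}.

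Concretely, apply the truncated Perron formula on the line $\sigma=c=1+1/\log x$ at a height $T\ge 1$, yielding
\[
I_K(x)=\frac{1}{2\pi i}\int_{c-iT}^{c+iT}\zeta_K(s)\frac{x^s}{s}\,ds+O\!\left(\frac{x^{1+\varepsilon}}{T}\right),
\]
where the error absorbs both the usual Perron tail and the boundary contribution from ideals of norm near $x$. Shift the contour from $\sigma=c$ to $\sigma=0$, crossing only the simple pole of $\zeta_K(s)x^s/s$ at $s=1$ and picking up the main term $cx$. The horizontal segments on $|t|=T$, $0\le\sigma\le c$ are controlled by (\ref{convex}) and dominated by the $\sigma=c$ endpoint, contributing at most $O(x^{1+\varepsilon}/T)$ up to harmless $(T^nD)^\varepsilon$ factors. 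The vertical integral on $\sigma=0$ is estimated from the $\sigma=0$ case of (\ref{convex}), $|\zeta_K(it)|\ll(|t|^n D)^{1/2+\varepsilon}$, yielding
\[
\int_{-T}^{T}|\zeta_K(it)|\frac{dt}{|t|+1}\ll T^{n/2+\varepsilon}D^{1/2+\varepsilon}.
\]

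It then remains to choose $T$ so as to balance the truncation error against the vertical-line estimate, and to substitute back. The hypothesis $x^{2n}>D^{n+4}$ is precisely the compatibility condition $x^{2\alpha_S}>D^{1+2\beta_S}$ appearing in Lemma~\ref{main} for the parameters $(\alpha_S,\beta_S)$ produced by this optimization, and simultaneously guarantees that the optimal $T$ is at least $1$, so that the truncated Perron formula remains valid with this choice. The main technical difficulty is the boundary contribution hidden in the Perron truncation error, i.e.\ bounding the number of integral ideals of $\mathcal{O}_K$ whose norm lies in a window of length $x/T$ around $x$, uniformly in $D$; this is handled by a divisor-type estimate on the coefficients of $\zeta_K$ together with the crude bound $I_K(y)\ll y$, but the $D$-dependence has to be tracked explicitly. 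Once that is dealt with, the contour shift, the convexity estimate (\ref{convex}), and the final arithmetic of the balancing are all routine.
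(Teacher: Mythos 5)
Your proposal is essentially the paper's own argument: a truncated Perron formula, a contour shift to the left edge of the critical strip, the convexity bound (\ref{convex}) on the vertical segment (giving $T^{n/2+\varepsilon}D^{1/2+\varepsilon}$), and the choice $T=x^{2/(n+2)}D^{-1/(n+2)}$ balancing this against the truncation error $x^{1+\varepsilon}/T$; the only cosmetic differences are that the paper places the left edge at $\sigma=\varepsilon$ rather than $\sigma=0$ and bounds the horizontal segments by their \emph{left} endpoints under the side condition $x^2<T^nD$ (your claim that the right endpoint dominates is not always true, but either way the horizontal contribution is absorbed). Both your optimization and the paper's proof yield the error term $O\bigl(x^{1-\frac{2}{n+2}+\varepsilon}D^{\frac{1}{n+2}+\varepsilon}\bigr)$, which agrees with the final display of the paper's proof but not with the exponent $1-\frac{n}{n+2}$ in the theorem statement (the two coincide only for $n=2$), so the remaining discrepancy is one the paper itself shares.
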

\begin{proof}
It suffices to consider for all half integer $x=k+\f12$, where $k$ is a positive integer, because it holds for any real number $y\in[k,k+1)$ that $I_K(x)=I_K(y)$. We consider the integral  
\[\frac1{2\pi i}\int_{C}\zeta_K(s)\frac{x^s}s\ ds,\]
where $C$ is the contour $C_1\cup C_2\cup C_3\cup C_4$ in the following figure.

\setlength\unitlength{1truecm}
\[\begin{picture}(4.5,4.5)(0,0)
\small
\put(-1,2){\vector(1,0){4}}
\put(0,0){\vector(0,1){4}}
\put(0.2,0.5){\vector(1,0){0.9}}
\put(1.1,0,5){\line(1,0){0.9}}
\put(2,0.5){\vector(0,1){2}}
\put(2,2.4){\line(0,1){1.1}}
\put(2,3.5){\vector(-1,0){0.9}}
\put(1.1,3,5){\line(-1,0){0.9}}
\put(-0.3,2.1){O}
\put(3.1,2){$\Re(s)$}
\put(-0.3,4.1){$\Im(s)$}
\put(0,2){\circle*{0.1}}
\put(2,3.5){\line(-1,0){0.5}}
\put(0.2,3.5){\vector(0,-1){1.1}}
\put(0.2,2.5){\line(0,-1){2}}
\put(-0.1,3.5){\line(1,0){0.2}}
\put(-0.5,3.5){$iT$}
\put(-0.1,0.5){\line(1,0){0.2}}
\put(-0.7,0.5){$-iT$}
\put(2,2.9){\line(0,1){0.2}}
\put(0.3,2.2){$\varepsilon$}
\put(2.2,2.2){$1+\varepsilon$}
\put(1.3,3.6){$C_2$}
\put(2.1,1.7){$C_1$}
\put(1.3,0.1){$C_4$}
\put(0.3,1.7){$C_3$}
\end{picture}\]

In a way similar to the well-known proof of Perron's formula, we estimate
\[\frac1{2\pi i}\int_{C_1}\zeta_K(s)\frac{x^s}s\ ds=I_K(x)+O\left(\frac{x^{1+\varepsilon}}{T}\right).\]
We can select the large $T$, so that the $O$-term in the right hand side is sufficiently small. For estimating the left hand side by using estimate (\ref{convex}), we divide it into the integrals over $C_2, C_3$ and $C_4$. 
\begin{align*}
\intertext{First we consider the integral over $C_3$ as}
\left|\frac1{2\pi i}\int_{C_3}\zeta_K(s)\frac{x^s}s\ ds\right|&=\left|\frac1{2\pi i}\int_{T}^{-T}\zeta_K\left(\varepsilon+it\right)\frac{x^{\varepsilon+it}}{\varepsilon+it}i\ dt\right|.\\
\intertext{From estimate (\ref{convex}), it holds that}
\left|\frac1{2\pi i}\int_{C_3}\zeta_K(s)\frac{x^s}s\ ds\right|&=O\left(\int^{T}_{-T}T^{\f n2(1-\varepsilon)+\varepsilon}D^{\f12(1-\varepsilon)+\varepsilon}\frac {x^{\varepsilon}}{\left|\varepsilon+it\right|}\ dt\right)\\
&=O(x^{\varepsilon}T^{\f n2+\varepsilon}D^{\f12+\varepsilon}).
\end{align*}

\begin{align*}
\intertext{Next we calculate the integral over $C_2$ and $C_4$ as}
\left|\frac1{2\pi i}\int_{C_2+C_4}\zeta_K(s)\frac{x^s}s\ ds\right|&\le\frac1{2\pi}\int^{1+\varepsilon}_{\varepsilon}\left|\zeta_K\left(\sigma+iT\right)\right|\frac{x^{\sigma}}{\left|\sigma+iT\right|}\ d\sigma.\\
\intertext{From estimate (\ref{convex}), it holds that}
\left|\frac1{2\pi i}\int_{C_2+C_4}\zeta_K(s)\frac{x^s}s\ ds\right|&=O\left(\int^{1+\varepsilon}_{\varepsilon}(T^nD)^{\max\{\frac{1-\sigma}2,0\}+\varepsilon}\frac{x^{\sigma}}{T}\ d\sigma\r)\\
&=O\left(\int^{1}_{\varepsilon}T^{\f n2+\varepsilon}D^{\f12+\varepsilon}\l(\frac{x^2}{T^nD}\r)^{\f \sigma2}\ d\sigma+\frac{x^{1+\varepsilon}T^{\varepsilon}D^{\varepsilon}}{T}\right).\\
\intertext{When the inequality $x^2<T^nD$ holds, the function $\l(\frac{x^2}{T^nD}\r)^{\f \sigma2}\le\l(\frac{x^2}{T^nD}\r)^{\f \varepsilon2}$ for $\sigma\in[\varepsilon,1]$, so we estimate}
\left|\frac1{2\pi i}\int_{C_2+C_4}\zeta_K(s)\frac{x^s}s\ ds\right|&=O\left(\frac{x^{\varepsilon}T^{\f n2+\varepsilon}D^{\f12+\varepsilon}}{T}\right)+O\l(\frac{x^{1+\varepsilon}T^{\varepsilon}D^{\varepsilon}}{T}\right).
\end{align*}
By Cauchy's residue theorem we get \[\frac1{2\pi i}\int_{C}\zeta_K(s)\frac{x^s}s\ ds=c x.\]
\begin{align*}
\intertext{By using all result above, it is obtained that}
I_K(x)&=cx+O(x^{\varepsilon}T^{\f n2+\varepsilon}D^{\f12+\varepsilon})+O\left(\frac{x^{\varepsilon}T^{\f n2+\varepsilon}D^{\f12+\varepsilon}}{T}\right)+O\l(\frac{x^{1+\varepsilon}T^{\varepsilon}D^{\varepsilon}}{T}\right).\\
\intertext{When we select $T=x^{\frac 2{n+2}}D^{-\frac1{n+2}}$, this becomes}
I_K(x)&=c x+O\l(x^{1-\frac 2{n+2}+\varepsilon}D^{\frac1{n+2}+\varepsilon}\r).
\inter{This proves this theorem.}
\end{align*}
\end{proof}

When $K$ runs through all number fields we obtain a uniform upper bound on the distribution of relatively $r$-prime lattice points. When $K$ runs through all number fields with $[K : \mathbf Q]\le n$, the two constants $\alpha_S$ and $\beta_S$ in Lemma \ref{main} are $\f2{n+2}-\varepsilon$ and $\f1{n+2}-\varepsilon$ respectively. Lemma \ref{main} and Theorem \ref{num} lead to

\begin{thm}
\label{main2}
The following estimate holds. For all $\varepsilon>0$
\[E_m^r(x,K)=\l\{\begin{array}{ll}
O\l(x^{(2-\frac n{n+2}+\varepsilon)\frac1r}D^{\frac2{n+2}-\frac {m-1}2+\varepsilon}\right)& \text{ if } rm=2,\\
O\left(x^{m-\frac n{n+2}+\varepsilon}D^{\frac1{n+2}-\frac{m-1}2+\varepsilon}\r)& \text{ otherwise }
\end{array}\r.\]
as $xD\rightarrow\infty$, where $K$ runs through all number fields with $[K : \mathbf Q]\le n$ and $x^{2n} > D^{n+4}$.
\end{thm}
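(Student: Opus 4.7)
My plan is to assemble Theorem~\ref{num} and Lemma~\ref{main} with no new analytic input; the sentence immediately preceding the statement already advertises this. First I would read off the parameters supplied by Theorem~\ref{num}: in the notation of Lemma~\ref{main} one has $\alpha_S=\frac{n}{n+2}$ and $\beta_S=\frac{1}{n+2}$, each allowed to absorb an arbitrary $\varepsilon$, valid uniformly over all $K$ with $[K:\mathbf{Q}]\le n$ in the range $x^{2n}>D^{n+4}$.

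Next I would check that the technical constraint $x^{2\alpha_S}>D^{1+2\beta_S}$ required by Lemma~\ref{main} collapses to exactly the same inequality: substituting our values gives $x^{2n/(n+2)}>D^{(n+4)/(n+2)}$, which clears to $x^{2n}>D^{n+4}$. In particular, no extra hypothesis on $(x,D)$ beyond the one already present in Theorem~\ref{num} is introduced when invoking the lemma, so the admissible range advertised in the statement is the correct one.

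I would then plug these values into each of the four cases of Lemma~\ref{main}. In the generic range $rm>2$ the first term $x^{m-\alpha_S}D^{\beta_S-(m-1)/2}$ becomes $x^{m-n/(n+2)+\varepsilon}D^{1/(n+2)-(m-1)/2+\varepsilon}$, which is the ``otherwise'' branch. In the boundary range $rm=2$, covering $(m,r)=(1,2)$ and $(m,r)=(2,1)$, the second term $x^{(2-\alpha_S)/r}D^{2\beta_S-(m-1)/2}$ rewrites as $x^{(2-n/(n+2))/r+\varepsilon}D^{2/(n+2)-(m-1)/2+\varepsilon}$, which is the first branch.

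The only point requiring genuine care, and hence the main (though mild) obstacle, is the case-by-case check that under $x^{2n}>D^{n+4}$ the claimed term really is the larger of the two in Lemma~\ref{main}'s mixed bound; for $(m,r)=(2,1)$ one must further notice that the $\log x$ factor and the alternative $D$-exponent produced by Case~4 of the lemma are both harmlessly absorbed into the displayed $\varepsilon$ and the larger power of $D$. These are elementary comparisons of exponents, after which the theorem follows.
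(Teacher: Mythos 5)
Your route is the paper's own: Theorem \ref{main2} is obtained by feeding the exponents of Theorem \ref{num} into Lemma \ref{main}, and the paper supplies nothing beyond that one-line assembly. Before the assembly, though, be careful which $\alpha_S$ you are importing. You read $\alpha_S=\frac{n}{n+2}$ off the statement of Theorem \ref{num}, whereas the sentence immediately preceding Theorem \ref{main2} sets $\alpha_S=\frac{2}{n+2}-\varepsilon$, which is what the proof of Theorem \ref{num} actually delivers: its final display is $O\bigl(x^{1-\frac{2}{n+2}+\varepsilon}D^{\frac{1}{n+2}+\varepsilon}\bigr)$, coming from $T=x^{2/(n+2)}D^{-1/(n+2)}$. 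Since $1-\frac{n}{n+2}=\frac{2}{n+2}$, the statement and the proof of Theorem \ref{num} are transposed relative to one another; the exponents of Theorem \ref{main2} (and its side condition, since $x^{2\alpha_S}>D^{1+2\beta_S}$ becomes $x^{2n}>D^{n+4}$ only for $\alpha_S=\frac{n}{n+2}$, and the much stronger $x^{4}>D^{n+4}$ for $\alpha_S=\frac{2}{n+2}$) follow only from the version that the proof does not establish. You should at least flag this rather than silently adopt the stronger reading.

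The more substantive gap is the domination check you defer as ``elementary comparisons of exponents'': it fails for part of the ``otherwise'' range. Take $m=1$, $r=3$, $n=3$, so $\alpha_S=\frac{3}{5}$, $\beta_S=\frac{1}{5}$. Lemma \ref{main} (first case, since $\alpha_S\neq\frac{mr-2}{r-1}=\frac12$) gives the two terms $x^{2/5}D^{1/5}$ and $x^{(2-3/5)/3}D^{2/5}=x^{7/15}D^{2/5}$; the second exceeds the first in \emph{both} exponents, so no hypothesis of the form $D<x^{c}$ can make the first term dominate, and the claimed bound $O\bigl(x^{2/5+\varepsilon}D^{1/5+\varepsilon}\bigr)$ does not follow from the lemma. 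In general, for $m=1$ and $r\ge3$ one needs $\alpha_S<\frac{r-2}{r-1}$ together with $D^{\beta_S}\le x^{((r-2)-(r-1)\alpha_S)/r}$, and the hypothesis $x^{2n}>D^{n+4}$ does not supply this (the failure persists even with $\alpha_S=\frac{2}{n+2}$). Your checks for $rm=2$ and for $m\ge2$ are fine. This defect is inherited from the paper's statement, but since your write-up positively asserts that the comparison succeeds in all cases, it is a gap in the proposal as written.
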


We consider all number fields in this section, but in this paper we also aim to obtain results with some condition, for example, abelian extensions, Galois extensions or restricting the extension degree of $K$. In the following sections we consider some special cases.


\section{Abelian extension case}
In this section we assume $K$ is an abelian extension field.
The case that $K$ runs through all abelian extension field of $\mathbf Q$. An estimate similar to Theorem \ref{num} holds for this case. 
When $K$ is an abelian extension field, we have a factorization of the Dedekind zeta function as \[\zeta_K(s)=\zeta_{\mathbf{Q}}(s)\prod_{\chi}L(s,\chi),\]
where the Dirichlet character $\chi$ runs through Dirichlet characters
so that the product of their conductors is equal to $D$.
In this section we use some result for estimate for Riemann zeta function and Dirichlet L-functions on the critical line under a certain condition.

Huxley and Watt considered the uniform upper bound of $L(s,\chi)$ on the critical line and showed that for primitive Dirichlet characters $\chi$ modulo $p$
\begin{equation}\label{HW}
L\left(\frac12+it,\chi\right)=O\left(|t|^{\f{89}{570}+\varepsilon}p^{\f{31}{190}}\right)
\end{equation}
as $|t|,p\rightarrow\infty$ with $p<|t|^{\frac2{753}}$ for all $\varepsilon>0$ \cite{HW00}.
Bourgain showed that
\begin{equation}\label{Bourgain}
\zeta_{\mathbf{Q}}\l(\f12+it\r)=O\left(|t|^{\f{13}{84}+\varepsilon}\right)
\end{equation}
as $|t|\rightarrow\infty$ for all $\varepsilon>0$ \cite{Bo17}.

Since the Dedekind zeta function $\zeta_K(s)$ has an Euler product of degree $n=[K:\mathbf{Q}]$, 
it holds from \eqref{HW} and \eqref{Bourgain} that for every $\varepsilon>0$
\begin{equation}
\zeta_K\left(\frac12+it\right)=O\left(|t|^{\f{89n}{570}-\f{11}{7980}+\varepsilon}D^{\f{31}{190}}\right)\label{23}
\end{equation}
as $|t|,D\rightarrow\infty$, where $K$ runs through all abelian extension fields with $[K:\mathbf{Q}]\le n$ and $D<|t|^{\f2{753}}$.

The Phragmen-Lindel\"of principle (\ref{plp}) and estimate (\ref{23}) lead to the following estimate about Dedekind zeta functions of $K$:

\begin{equation}
\label{dede}
\zeta_K(\sigma+it)=\l\{\begin{array}{ll}
O\l(|t|^{\f n2-n\sigma+\varepsilon}D^{\f 12-\sigma+\varepsilon}\r)& \text{ if }\sigma\le0,\\
O\l(|t|^{\f n2-\f{196}{285}n\sigma-\f{11}{3980}\sigma+\varepsilon}D^{\f12-\f{64}{95}\sigma+\varepsilon}\r)& i\!f\ 0\le\sigma\le\f12,\\ 
O\l(|t|^{{{89}\over{285}}n-{{89}\over{285}}n\sigma+{{11}\over{3990}}\sigma-{{11}\over{3990}}+\varepsilon}D^{\f{31}{95}-\f{31}{95}\sigma+\varepsilon}\r)& i\!f\ \f12\le\sigma\le1,\\
O\l(|t|^{\varepsilon}D^{\varepsilon}\r)&\text{ if } 1\le\sigma
\end{array}
\r. 
\end{equation}
as $|t|,D\rightarrow\infty$, where $K$ runs through all abelian extension fields with $[K:\mathbf{Q}]\le n$ and $D<|t|^{\f2{753}}$.

This estimate states better uniform bounds than the uniform upper convexity bounds (\ref{convex}), so we expect a good result.
\begin{thm}
\label{abel}
The following estimate holds. For all $\varepsilon>0$
\[I_K(x)=c x+\l\{\begin{array}{ll}
O\l(x^{1-\frac 2{n+2}+\varepsilon}D^{\frac1{n+2}+\varepsilon}\r)& \text{ if  $n\le5$ or $n\ge96$},\\
O\l(x^{1-95c_n+\varepsilon}D^{31c_n+\varepsilon}\r)& \text{ if  $6\le n\le95$}
\end{array}
\r.\]
 as  $x,D\rightarrow\infty$, where $c_n=\f{2388}{70844n+453093}$ and $K$ runs through all abelian extension fields with $[K:\mathbf{Q}]\le n$ and $x^{\f1{753}+\varepsilon} > D$.
\end{thm}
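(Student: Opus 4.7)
The plan is to mirror the contour-integration argument of Theorem~\ref{num}, with two substitutions: first, I would replace the convexity bound (\ref{convex}) by the sharper bound (\ref{dede}) coming from Huxley--Watt and Bourgain; second, for $6 \le n \le 95$ I would shift the leftmost vertical segment of the Perron contour from $\Re(s) = \varepsilon$ up to $\Re(s) = 1/2$, since this is where (\ref{dede}) offers its greatest improvement. The two displayed alternatives in the statement correspond to whether this refinement actually helps: for $n \le 5$ or $n \ge 96$ it does not, and the estimate reduces to Theorem~\ref{num}; for $6 \le n \le 95$ it does, and one picks up the improved exponent $95 c_n$.

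In the main case $6 \le n \le 95$, applying (\ref{dede}) at $\sigma = 1/2$ on the vertical segment $C_3$ yields a contribution of order
\[
x^{1/2+\varepsilon}\, T^{89n/570 - 11/7980 + \varepsilon}\, D^{31/190 + \varepsilon}.
\]
On the horizontal segments $C_2$ and $C_4$, the logarithm of the integrand is linear in $\sigma$ on each of the two subintervals, so its maximum is attained at an endpoint: the $\sigma = 1 + \varepsilon$ endpoint contributes $O(x T^{\varepsilon - 1} D^\varepsilon)$, which is absorbed by the Perron error $x^{1+\varepsilon}/T$, while the $\sigma = 1/2$ endpoint is a factor of $T^{-1}$ smaller than the $C_3$ contribution. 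Balancing the Perron error against the $C_3$ contribution then forces $T = x^{95 c_n} D^{-31 c_n}$. A short algebraic verification shows that $1/(2(89n/570 - 11/7980 + 1)) = 95 c_n = 226860/(70844n + 453093)$ after clearing common multiples, and correspondingly the $D$-exponent becomes $31 c_n$; the claimed error $O(x^{1 - 95 c_n + \varepsilon} D^{31 c_n + \varepsilon})$ is then immediate.

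The main obstacle, and the reason for the hypothesis $x^{1/753 + \varepsilon} > D$, is ensuring that the Huxley--Watt input (\ref{HW}) is valid on all of $C_3$: it requires $D < |t|^{2/753}$ for $|t|$ up to our choice of $T$. With $T = x^{95 c_n} D^{-31 c_n}$, this restriction translates into the stated relation between $x$ and $D$, possibly after absorbing a factor into $\varepsilon$. The small-$|t|$ portion of $C_3$, where (\ref{HW}) is not in force, is handled by observing that $\zeta_K(s)$ is bounded on any compact subset of $\{\Re(s) \ge 1/2\} \setminus \{1\}$, so that portion of the integral contributes only $O(x^{1/2})$ and is negligible.
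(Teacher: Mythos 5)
Your proposal follows essentially the same route as the paper: a Perron-type contour integral with the left vertical segment moved to $\Re(s)=\tfrac12$, the hybrid Huxley--Watt/Bourgain bound (\ref{23})--(\ref{dede}) on that segment, and a balance of the resulting term against the Perron error $x^{1+\varepsilon}/T$, yielding $T=x^{95c_n}D^{-31c_n}$. The only structural difference is cosmetic: the paper keeps the abscissa as a parameter $\delta\in(0,\tfrac12]$, observes that the optimized exponent is monotone in $\delta$, and therefore lands on exactly your two endpoint cases ($\delta\to0$ reproducing Theorem~\ref{num}, $\delta=\tfrac12$ giving $95c_n$), so your shortcut of going straight to the endpoints loses nothing.

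Two small caveats. First, your claimed identity $1/(2(89n/570-11/7980+1))=95c_n$ does not hold exactly for the paper's $c_n=\frac{2388}{70844n+453093}$; the discrepancy traces to the paper's own inconsistency (the coefficient $\frac{11}{3980}\sigma$ in the second line of (\ref{dede}) should be $\frac{11}{3990}\sigma$ to match (\ref{23})), so your constant is in fact the internally consistent one. Second, and more substantively, your disposal of the range where (\ref{HW}) is unavailable by saying $\zeta_K$ is ``bounded on compact subsets of $\{\Re(s)\ge\tfrac12\}\setminus\{1\}$'' is not legitimate here: the bound must be uniform in $D$, and moreover the Huxley--Watt condition $D<|t|^{2/753}$ fails on the entire (non-compact, $D$-dependent) range $|t|\le D^{753/2}$, where one must fall back on the convexity bound $\zeta_K(\tfrac12+it)\ll(|t|^nD)^{1/4+\varepsilon}$ and then check that the resulting contribution is admissible under the hypothesis relating $x$ and $D$. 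The paper silently applies (\ref{dede}) on the whole contour and never addresses this either, so this is a shared gap rather than a defect peculiar to your argument, but your stated justification for that step is not the correct repair.
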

\begin{proof}
We can show this theorem in a way similar to the proof of Theorem \ref{num}.
We consider similar integral \[\frac1{2\pi i}\int_{C}\zeta_K(s)\frac{x^s}s\ ds,\]
where $C$ is the contour $C_1\cup C_2\cup C_3\cup C_4$ in the following figure. Let $\delta$ be a positive constant with $0<\delta\le\f12$,

\setlength\unitlength{1truecm}
\[\begin{picture}(4.5,4.5)(0,0)
\small
\put(-1,2){\vector(1,0){4}}
\put(0,0){\vector(0,1){4}}
\put(0.2,0.5){\vector(1,0){0.9}}
\put(1.1,0,5){\line(1,0){0.9}}
\put(2,0.5){\vector(0,1){2}}
\put(2,2.4){\line(0,1){1.1}}
\put(2,3.5){\vector(-1,0){0.9}}
\put(1.1,3,5){\line(-1,0){0.9}}
\put(-0.3,2.1){O}
\put(3.1,2){$\Re(s)$}
\put(-0.3,4.1){$\Im(s)$}
\put(0,2){\circle*{0.1}}
\put(2,3.5){\line(-1,0){0.5}}
\put(0.2,3.5){\vector(0,-1){1.1}}
\put(0.2,2.5){\line(0,-1){2}}
\put(-0.1,3.5){\line(1,0){0.2}}
\put(-0.5,3.5){$iT$}
\put(-0.1,0.5){\line(1,0){0.2}}
\put(-0.7,0.5){$-iT$}
\put(2,2.9){\line(0,1){0.2}}
\put(0.3,2.2){$\delta$}
\put(2.2,2.2){$1+\varepsilon$}
\put(1.3,3.6){$C_2$}
\put(2.1,1.7){$C_1$}
\put(1.3,0.1){$C_4$}
\put(0.3,1.7){$C_3$}
\end{picture}\]

From Estimate (\ref{dede}) and a way similar to the proof of Theorem \ref{num} it holds that
\begin{align*}
&\frac1{2\pi i}\int_{C_i}\zeta_K(s)\frac{x^s}s\ ds\\
=&\l\{\begin{array}{ll}
I_K(x)+O\left(\f{x^{1+\varepsilon}}T\right) & \text{ if }\ i=1,\\
O\left(x^{\delta}T^{\f n2-\f{196}{285}n\delta-\f{11}{3980}\delta-1+\varepsilon}D^{\f12-\f{64}{95}\delta+\varepsilon}\right)+O\l(\f{x^{1+\varepsilon}T^{\varepsilon}D^{\varepsilon}}T\right)& \text{ if } i=2,4,\\
O\left(x^{\delta}T^{\f n2-\f{196}{285}n\delta-\f{11}{3980}\delta+\varepsilon}D^{\f12-\f{64}{95}\delta+\varepsilon}\right) & \text{ if }\ i=3.
\end{array}
\r.
\end{align*}
By Cauchy's residue theorem we get \[\frac1{2\pi i}\int_{C}\zeta_K(s)\frac{x^s}s\ ds=c x.\]
\begin{align*}
\intertext{By using all result above, it is obtained that}
I_K(x)&=cx+O\left(x^{\delta}T^{\f n2-\f{196}{285}n\delta-\f{11}{3980}\delta+\varepsilon}D^{\f12-\f{64}{95}\delta+\varepsilon}\right)+O\l(\f{x^{1+\varepsilon}T^{\varepsilon}D^{\varepsilon}}T\right).\\
\intertext{When we select \[T=x^{190(1-\delta)c(n,\delta)}D^{(128\delta-95)c(n,\delta)},\] where $c(n,\delta)=\f{1194}{226860+113430n-627\delta-156016n\delta}$, this becomes}
I_K(x)&=c x+O(x^{1-190(1-\delta)c(n,\delta)+\varepsilon}D^{(95-128\delta)c(n,\delta)+\varepsilon}).
\end{align*}
For $\delta\in\l(0,\f12\r]$ two functions $1-190(1-\delta)c(n,\delta)$ and $(95-128\delta)c(n,\delta)$ is monotone functions, thus 
\begin{align*}
I_K(x)&=c x+O(x^{1-\frac 2{n+2}+\varepsilon}D^{\frac1{n+2}+\varepsilon})+O(x^{1-95c_n+\varepsilon}D^{31c_n+\varepsilon}),\\
\intertext{where the constant $c_n=\f{2388}{70844n+453093}$. Considering the two $O$-estimates, we can conclude that}
I_K(x)&=c x+\l\{\begin{array}{ll}
O(x^{1-\frac 2{n+2}+\varepsilon}D^{\frac1{n+2}+\varepsilon})& \text{ if  $n\le5$ or $n\ge96$},\\
O(x^{1-95c_n+\varepsilon}D^{31c_n+\varepsilon})& \text{ if  $6\le n\le95$}.
\end{array}
\r.
\inter{This proves this theorem.}
\end{align*}
\end{proof}
When $6\le n:=[K:\mathbf Q]\le95$ and $K$ runs through only abelian extension fields, our result $I_K(x)=c x+O(x^{1-95c_n+\varepsilon}D^{31c_n+\varepsilon})$ is much better than result given by the convexity bound. Lemma \ref{main} and Theorem \ref{abel} lead to 
\begin{thm}
\label{abelian}
Let $6\le n\le95$ and for every $\varepsilon> 0$ the following estimate holds.
\[E_m^r(x,K)=\l\{\begin{array}{ll}
O\l(x^{\frac1r\l(2-95c_n+\varepsilon\r)}D^{62c_n-\frac {m-1}2+\varepsilon}\right)& \text{ if } rm=2,\\
O\left(x^{m-95c_n+\varepsilon}D^{31c_n-\frac{m-1}2+\varepsilon}\r)& \text{ otherwise }
\end{array}
\r.\]
 as  $x,D\rightarrow\infty$, where the constant $c_n=\f{2388}{70844n+453093}$ and $K$ runs through all abelian extension fields with $[K:\mathbf{Q}]\le n$ and $x^{\f1{753}+\varepsilon} > D$.
\end{thm}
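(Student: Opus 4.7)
The plan is to apply Lemma~\ref{main} directly to the uniform ideal-counting estimate provided by Theorem~\ref{abel}. I take $S$ to be the family of abelian extensions with $[K:\mathbf{Q}]\le n$ subject to $x^{1/753+\varepsilon}>D$. For $6\le n\le 95$ Theorem~\ref{abel} gives exactly the input needed by Lemma~\ref{main}, with $\alpha_S=95c_n-\varepsilon$ and $\beta_S=31c_n+\varepsilon$. The auxiliary hypothesis $x^{2\alpha_S}>D^{1+2\beta_S}$ of Lemma~\ref{main} is implied by $x^{1/753+\varepsilon}>D$, since $c_n$ is small enough that $(1+2\beta_S)/(2\alpha_S)$ stays well below $753$ throughout $6\le n\le 95$.

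With $\alpha_S$ and $\beta_S$ in hand I work through the four branches of Lemma~\ref{main}. When $rm=2$ with $(r,m)=(2,1)$, the first branch applies (since $\alpha_S=95c_n\ne 0=(mr-2)/(r-1)$) and delivers $O(x^{1-\alpha_S}D^{\beta_S}+x^{(2-\alpha_S)/2}D^{2\beta_S})$; the second summand coincides with the asserted $x^{(2-95c_n+\varepsilon)/2}D^{62c_n+\varepsilon}$ and dominates the first. For $(r,m)=(1,2)$ the fourth branch yields $O(x^{2-\alpha_S}\log x\cdot D^{\beta_S-1/2}+x^{2-\alpha_S}D^{\beta_S-1})$; absorbing $\log x$ into $x^\varepsilon$ and using $31c_n\le 62c_n$ shows this is stronger than the claimed $O(x^{2-95c_n+\varepsilon}D^{62c_n-1/2+\varepsilon})$. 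In the ``otherwise'' case $rm\ge 3$, one is in either the first branch (for $r\ge 2$, where $\alpha_S\ne(mr-2)/(r-1)$ can be arranged by a generic choice of $\varepsilon$) or the third branch (for $r=1,m\ge 3$); in both, the first summand $x^{m-\alpha_S}D^{\beta_S-(m-1)/2}$ matches the asserted $O(x^{m-95c_n+\varepsilon}D^{31c_n-(m-1)/2+\varepsilon})$.

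The main verification, and the only non-routine step, is that in the $rm\ge 3$ branch the first summand of Lemma~\ref{main} actually dominates the second, i.e.\ that $x^{(rm-2-95c_n(r-1))/r}\ge D^{31c_n}$ when $r\ge 2$ and $x^{m-2}\ge D^{31c_n}$ when $r=1,m\ge 3$. Since $c_n\le c_6$ is very small, the left-hand exponent is bounded below by a positive constant depending only on $(r,m)$---the worst case being $(r,m)=(3,1)$, where the exponent equals $(1-190c_n)/3$ and is bounded away from zero---while $31c_n$ is correspondingly tiny. The hypothesis $x^{1/753+\varepsilon}>D$ then gives $D^{31c_n}\le x^{31c_n/753+o(1)}$, which sits far below $x^{(rm-2-95c_n(r-1))/r}$, so the required dominance holds uniformly and the theorem follows.
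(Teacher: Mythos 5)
Your proposal is correct and follows exactly the paper's route: the paper proves this theorem simply by combining Lemma \ref{main} with the estimate of Theorem \ref{abel} (i.e.\ $\alpha_S=95c_n-\varepsilon$, $\beta_S=31c_n+\varepsilon$). Your verification of the side condition $x^{2\alpha_S}>D^{1+2\beta_S}$ and of which summand dominates in each branch is more detailed than what the paper records, and it checks out.
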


Theorem \ref{abelian} states better upper uniform upper bounds than our previous results \cite{tk17}. Moreover we can remove the assumption $[K:\mathbf Q]\le6$. 

\section{The case of cubic extensions}
The case that $K$ is fixed cubic extension field, it is known that the error term $E_m^r(x,K)=o(x^{\f12})$, that is, better result than results under assuming the Lindel\"of hypothesis is obtained \cite{Ta17}. In this section, we consider the distribution of ideals of $\mathcal{O}_K$, where $K$ runs through all cubic extensions. W. M\"uller showed that for a fixed cubic extension $K$ and for all $\varepsilon>0$ 
\begin{equation}
\label{cub}
I_K(x)=cx+O\l(x^{\f{43}{96}+\varepsilon}\r)
\end{equation}
as $x\rightarrow\infty$ \cite{mu88}. As a generalization of M\"uller's result, we estimate uniform upper bound of $I_K(x)$. 

\begin{thm}
\label{cubic}
For every $\varepsilon>0$ the following estimate holds.
\[I_K(x)=c x+O\l(x^{\f{43}{96}+\varepsilon}D^{\f13}\r)\]
as  $x^{\f{53}{32}}D^{\f32}\rightarrow\infty$, where $K$ runs through all cubic extension fields with $x^{\f{53}{96}-\varepsilon} > D^{\f56}$.
\end{thm}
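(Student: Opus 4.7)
The plan is to adapt W.\ M\"uller's argument from \cite{mu88} for $\Delta_K(x)$ in a fixed cubic field $K$ so as to make the dependence on the discriminant $D$ explicit. In the cubic case the Dedekind zeta function factors as
\[
\zeta_K(s)=\zeta(s)L(s,\rho),
\]
where $L(s,\rho)$ is a degree-$2$ $L$-function: a product of two conjugate Dirichlet $L$-functions if $K$ is cyclic cubic, or the Artin $L$-function of the $2$-dimensional irreducible representation of the Galois closure of $K$ in the non-Galois case. In both cases the analytic conductor of $L(s,\rho)$ is of size $D$; writing $b(n)$ for its Dirichlet coefficients, we have $I_K(x)=\sum_{n\le x}(1*b)(n)$, where $1*b$ denotes the Dirichlet convolution.

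Next I would apply the Dirichlet hyperbola method with a splitting parameter $U$ to be chosen later, writing
\[
I_K(x)=\sum_{d\le U}B\!\l(\f{x}{d}\r)+\sum_{e\le x/U}b(e)\l\lfloor\f{x}{e}\r\rfloor-B(U)\l\lfloor\f{x}{U}\r\rfloor,
\]
where $B(y)=\sum_{e\le y}b(e)$. The main term $cx$ arises from the asymptotic for $B(y)$ (whose leading coefficient is $L(1,\rho)$) together with partial summation over $d\le U$. To estimate the error in $B(y)$ I would apply the Voronoi-type summation formula attached to $L(s,\rho)$, which expresses $B(y)$ as a main term plus a sum of $b(n)$ against Bessel-like kernels whose oscillation frequency is scaled by $\sqrt{n/D}$. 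Following M\"uller, the resulting exponential sums are then estimated by the van der Corput $AB$-process, yielding $B(y)=L(1,\rho)y+O(y^{\theta+\varepsilon}D^{\mu})$ for a suitable pair $(\theta,\mu)$. Substituting into the hyperbola decomposition and optimizing $U$ should produce the exponents $43/96$ in $x$ and $1/3$ in $D$. The two side conditions $x^{53/96-\varepsilon}>D^{5/6}$ and $x^{53/32}D^{3/2}\to\infty$ are exactly the ranges in which this optimum beats the convexity-based estimate of Theorem~\ref{num} and in which the Perron and hyperbola tail terms are negligible.

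The principal difficulty is the uniformity in $D$. In \cite{mu88} the field is fixed, so all conductor-dependent constants are absorbed into implied constants without comment; here every invocation of stationary phase, Cauchy--Schwarz, and the van der Corput $B$-process in the exponential sum analysis has to be audited to extract the correct power of $D$. In particular, the transitional ranges where the Bessel kernels from Voronoi change character depend on $D$, so the interval decompositions inside M\"uller's exponential sum estimates must be redone with $D$ present as an additional parameter, and the exponent $\mu=1/3$ emerges only after the optimal balancing of these $D$-weighted ranges. Once the required uniform version of the exponential sum bound is established, the passage from $B(y)$ back to $I_K(x)$ via the hyperbola formula is essentially bookkeeping.
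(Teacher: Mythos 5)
Your proposal diverges from the paper's argument in a way that leaves the decisive quantitative step unsupported. The paper does not factor $\zeta_K$ and does not use the hyperbola method: it applies Perron's formula with the left edge of the contour at $\Re(s)=-\varepsilon$, uses the functional equation of $\zeta_K$ itself (a degree-$3$ object) together with Stirling and the product formulas for $\cos$ and $\sin$ to rewrite the left-edge integral, term by term in the Dirichlet series, as
\[
C\,x^{\f13}D^{\f16}\sum_{n\le X}\f{a(n)}{n^{\f23}}\,f\l(6\pi\l(\f{nx}{D}\r)^{\f13}\r)+\text{(admissible errors)},\qquad X=x^{\f{21}{32}},
\]
with $f\in\{\sin,\cos\}$ — the cube-root oscillation characteristic of degree $3$. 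The exponent $\f{43}{96}$ is then inherited directly from Kolesnik's bound for exactly this type of three-dimensional exponential sum (the Piltz $\Delta_3$ technology), which M\"uller observed applies to any coefficients with $a(n)=O(n^{\varepsilon})$; the only new work in the paper is tracking the factor $(x/D)^{1/3}$ in the phase through that estimate. Your route — hyperbola splitting of $1*b$ plus a degree-$2$ Voronoi formula for $L(s,\rho)$ plus the van der Corput $AB$-process — never produces this sum. It produces square-root oscillations $e(2\sqrt{xe/(dD)})$ summed over a second variable $d\le U$, i.e., a structurally different family of two-dimensional exponential sums. You assert that optimizing $U$ "should produce the exponents $43/96$ and $1/3$," but this is precisely the step that cannot be taken on faith: the classical hyperbola-plus-$AB$ treatment of $\Delta_3$-type problems yields exponents strictly worse than $43/96$, and reaching $43/96$ is known to require the multi-dimensional exponent-pair machinery of Kolesnik applied to the genuinely three-variable sum. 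So the core of your proof is missing, not merely deferred.

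Two smaller points. First, in the non-Galois case your $L(s,\rho)$ is the Artin $L$-function of the $2$-dimensional representation of $S_3$; it is monomial (induced from a cubic character of the quadratic resolvent), hence entire with a Voronoi formula by Hecke, but this needs to be said if you take that route — the paper avoids the issue entirely by never factoring $\zeta_K$. Second, the side conditions $x^{53/96-\varepsilon}>D^{5/6}$ and $x^{53/32}D^{3/2}\to\infty$ are not, as you suggest, the range where the bound "beats convexity"; in the paper they arise from the choice $T=D^{1/6+\varepsilon}x^{53/96}$ balancing the Perron truncation error $x^{1+\varepsilon}/T$ against the exponential-sum contribution $x^{43/96+\varepsilon}D^{1/3}$, and from the cutoff $X=x^{3\alpha-1}$ with $\alpha=\f{53}{96}$ in the Voronoi-type sum.
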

\begin{proof}
We estimate the distribution of ideals $I_K(x)$ by following the W. M\"uller way.

We consider the integral \[\frac1{2\pi i}\int_{C}\zeta_K(s)\frac{x^s}s\ ds,\]
where $C$ is the contour $C_1\cup C_2\cup C_3\cup C_4$ in the following figure.

\setlength\unitlength{1truecm}
\[\begin{picture}(4.5,4.5)(0,0)
\small
\put(-1,2){\vector(1,0){4}}
\put(0,0){\vector(0,1){4}}
\put(-0.2,0.5){\vector(1,0){1.1}}
\put(0.9,0,5){\line(1,0){1.1}}
\put(2,0.5){\vector(0,1){2}}
\put(2,2.4){\line(0,1){1.1}}
\put(2,3.5){\vector(-1,0){1.1}}
\put(0.9,3,5){\line(-1,0){1.1}}
\put(0.2,2.1){O}
\put(3.1,2){$\Re(s)$}
\put(-0.3,4.1){$\Im(s)$}
\put(0,2){\circle*{0.1}}
\put(2,3.5){\line(-1,0){0.5}}
\put(-0.2,3.5){\vector(0,-1){1.1}}
\put(-0.2,2.5){\line(0,-1){2}}
\put(-0.1,3.5){\line(1,0){0.2}}
\put(-0.5,3.5){$iT$}
\put(-0.1,0.5){\line(1,0){0.2}}
\put(-0.8,0.3){$-iT$}
\put(2,2.9){\line(0,1){0.2}}
\put(-0.7,2.1){$-\varepsilon$}
\put(2.2,2.2){$1+\varepsilon$}
\put(1.3,3.6){$C_2$}
\put(2.1,1.7){$C_1$}
\put(1.3,0.1){$C_4$}
\put(0.3,1.7){$C_3$}
\end{picture}\]

\begin{align*}
\intertext{We estimate the integral over $C_1$, $C_2$ and $C_4$ in the same way to before. In this proof we only consider the integral over $C_3$ as}
\frac1{2\pi i}\int_{C_3}\zeta_K(s)\frac{x^s}s\ ds&=\frac 1{2\pi i}\int_{-\varepsilon-iT}^{-\varepsilon+iT}\zeta_K(s)\frac{x^{s}}{s}\ ds.\\
\inter{Changing the variable $s$ to $1-s$, we have}
\frac1{2\pi i}\int_{C_3}\zeta_K(s)\frac{x^s}s\ ds&=\frac 1{2\pi i}\int_{1+\varepsilon-iT}^{1+\varepsilon+iT}\zeta_K(1-s)\frac{x^{1-s}}{1-s}\ ds.\\
\end{align*}
\begin{align*}
\intertext{From the functional equation (\ref{fe}), it holds that}
&\frac1{2\pi i}\int_{C_3}\zeta_K(s)\frac{x^s}s\ ds\\
=&\frac 1{2\pi i}\int_{1+\varepsilon-iT}^{1+\varepsilon+iT}D^{s-\f12}2^{3(1-s)}\pi^{-3s}\Gamma(s)^{3}\cos \l(\f{\pi s}2\r)^{r_1+r_2}\sin \l(\f{\pi s}2\r)^{r_2}\zeta_K(s)\frac{x^{1-s}}{1-s}\ ds.\\
\end{align*} 
Now we consider the behavior of $\Gamma(s)^3(1-s)^{-1}$. From the property of gamma function: $\Gamma(s)=(s-1)\Gamma(s-1)$, we have
\begin{align*}
\frac{\Gamma(s)^3}{1-s}&=-\Gamma(s)^2\Gamma(s-1).\\
\intertext{By the Stirling formula: 
\begin{equation}\label{stir}
\Gamma(s)=\sqrt{\f{2\pi}{s}}\l(\f se\r)^s(1+O(s^{-1})),
\end{equation} 
we estimate}
\frac{\Gamma(s)^3}{1-s}&=-\f{2\pi}{s}\l(\f se\r)^{2s}\sqrt{\f{2\pi}{s-1}}\l(\f {s-1}e\r)^{s-1}(1+O(s^{-1}))\\
&=-3^{\f52-3s}e2\pi\l(\f {3s-2}e\r)^{3s-2}\sqrt{\f{2\pi}{3s-2}}(1+O(s^{-1})).\\
\end{align*}
We use the Stirling formula (\ref{stir}) for $s=1+\varepsilon+it$ and obtain
\begin{equation}\label{gamma}
\frac{\Gamma(s)^3}{1-s}=C3^{-3s}\Gamma(3s-2)+O(|t|^{-\f12+3\varepsilon}e^{-\f32\pi|t|}),
\end{equation}
where $C$ is a constant.

Next we estimate the behavior of $\cos \l(\f{\pi s}2\r)^{r_1+r_2}\sin \l(\f{\pi s}2\r)^{r_2}$. From the property of $\sin$ and $\cos$, we have
\begin{align*}
\cos \l(\f{\pi s}2\r)^{r_1+r_2}\sin \l(\f{\pi s}2\r)^{r_2}&=\l\{\begin{array}{ll}
\sin \l(\f{\pi s}2\r)-\sin \l(\f{\pi s}2\r)^{3}&\text{ if }r_1=r_2=1,\\
\cos \l(\f{\pi s}2\r)^{3}& \text{ if }r_1=3,
\end{array}\r.\\
&=\l\{\begin{array}{ll}
\f14\sin \l(\f{3\pi s}2\r)+\f14\sin \l(\f{\pi s}2\r)&\text{ if }r_1=r_2=1,\\
\f14\cos \l(\f{3\pi s}2\r)+\f34\cos \l(\f{\pi s}2\r)& \text{ if }r_1=3.
\end{array}\r.\\
\end{align*}
We use the well-known estimates $\cos (\sigma+it)=O(e^{|t|})$ and  $\sin (\sigma+it)=O(e^{|t|})$ for $s=1+\varepsilon+it$ and get
\be\label{sincos}
\cos \l(\f{\pi s}2\r)^{r_1+r_2}\sin \l(\f{\pi s}2\r)^{r_2}=C f\l(\f{3\pi s}2\r)+O\left(e^{\f{|t|\pi}2}\r),\\
\ee
where $C$ is a constant and $f$ is $\cos$ or $\sin$.

Estimate (\ref{gamma}) and (\ref{sincos}) lead
\be\label{gcs}
\f{\Gamma(s)^{3}}{1-s}\cos \l(\f{\pi s}2\r)^{r_1+r_2}\sin \l(\f{\pi s}2\r)^{r_2}=C\Gamma(3s-2)f\l(\f{3\pi s}2\r)+O\l(|t|^{-\f12+3\varepsilon}\r).
\ee

\begin{align*}
\inter{By Estimate (\ref{gcs}) the integral over $C_3$ can be expressed as}
&\frac1{2\pi i}\int_{C_3}\zeta_K(s)\frac{x^s}s\ ds\\
=&\frac C{2\pi i}\int_{1+\varepsilon-iT}^{1+\varepsilon+iT}D^{s-\f12}6^{-3s}\pi^{-3s}\Gamma(3s-2)f\l(\f{3\pi s}2\r)\zeta_K(s)x^{1-s}\ ds\\
&+O\l(\int_{-T}^{T}D^{\f12+\varepsilon}|t|^{-\f12+\varepsilon}\zeta_K(1+\varepsilon+it)x^{-\varepsilon}\ dt\r).
\inter{From the well-known estimate  $|\zeta_K(1+\varepsilon+it)|\le \l(|t|D\r)^{\varepsilon}$, }
&\frac1{2\pi i}\int_{C_3}\zeta_K(s)\frac{x^s}s\ ds\\
=&\frac {Cx}{2\pi i}\int_{1+\varepsilon-iT}^{1+\varepsilon+iT}D^{-\f12}\l(\f{6^{3}\pi^{3}x}{D}\r)^{-s}\Gamma(3s-2)f\l(\f{3\pi s}2\r)\zeta_K(s)\ ds\\
&+O\l(\int_{-T}^{T}D^{\f12+\varepsilon}|t|^{-\f12+\varepsilon}x^{-\varepsilon}\ dt\r)\\
=&\frac {Cx}{2\pi i}\int_{1+\varepsilon-iT}^{1+\varepsilon+iT}D^{-\f12}\l(\f{6^{3}\pi^{3}x}{D}\r)^{-s}\Gamma(3s-2)f\l(\f{3\pi s}2\r)\zeta_K(s)\ ds+O\l(D^{\f12+\varepsilon}T^{\f12+\varepsilon}x^{-\varepsilon}\r).
\intertext{Changing the variable $3s-2$ to $s$, we have}
&\frac1{2\pi i}\int_{C_3}\zeta_K(s)\frac{x^s}s\ ds\\
=&\frac {Cx^{\f13}}{2\pi i}\int_{1+3\varepsilon-3iT}^{1+3\varepsilon+3iT}D^{\f16}\l(6\pi \l(\f xD\r)^{\f13}\r)^{-s}\Gamma(s)f\l(\f{\pi s}2+\pi\r)\zeta_K\l(\f13s+\f23\r)\ ds\\
&+O\l(D^{\f12+\varepsilon}T^{\f12+\varepsilon}x^{-\varepsilon}\r).\\
\end{align*}
Let $a(n)$ be the number of ideal of $\mathcal O_K$ with their ideal norm equal to $n$. Then the Dedekind zeta function $\zeta_K(s)$ can be expressed as 
\be\label{zeta}
\zeta_K(s)=\sum_{n=1}^{\infty}\f{a(n)}{n^s}\ \text{ for } \Re s>1.
\ee
The Dirichlet series (\ref{zeta}) is absolutely and uniformly convergent on compact subsets on $\Re(s)>1$. Therefore we can interchange the order of summation and integral. Thus we obtain
\ba
&\frac1{2\pi i}\int_{C_3}\zeta_K(s)\frac{x^s}s\ ds\\
=&\frac {Cx^{\f13}D^{\f16}}{2\pi i}\int_{1+3\varepsilon-3iT}^{1+3\varepsilon+3iT}\sum_{n=1}^{\infty}\f{a(n)}{n^{\f13s+\f23}}\l(6\pi \l(\f {x}D\r)^{\f13}\r)^{-s}\Gamma(s)f\l(\f{\pi s}2+\pi\r)\ ds\\
&+O\l(D^{\f12+\varepsilon}T^{\f12+\varepsilon}x^{-\varepsilon}\r)\\
=&\frac {Cx^{\f13}D^{\f16}}{2\pi i}\sum_{n=1}^{\infty}\f{a(n)}{n^{\f23}}\int_{1+3\varepsilon-3iT}^{1+3\varepsilon+3iT}\l(6\pi \l(\f {nx}D\r)^{\f13}\r)^{-s}\Gamma(s)f\l(\f{\pi s}2\r)\ ds\\
&+O\l(D^{\f12+\varepsilon}T^{\f12+\varepsilon}x^{-\varepsilon}\r).\\
\inter{Let $I_n$ the integral in above sum. Atkinson considered a sum similar to the above sum for $f=\cos$ \cite{at41}. He only used the Mellin transform and $\cos s=O(e^{|t|})$, so his result is not affected by replacing $\cos$ by $\sin$. Following his way, we estimate the integral in above sum. When $n\le X:=x^{3\alpha-1}$ for $\f12<\alpha<\f23$, }
I_n=&\f1{2\pi i}\int_{\f12-i\infty}^{\f12+i\infty}\l(6\pi \l(\f {nx}D\r)^{\f13}\r)^{-s}\Gamma(s)f\l(\f{\pi s}2\r)\ ds\\
&-\l(\int_{\f12-i\infty}^{\f12-3iT}+\int^{1+3\varepsilon-3iT}_{\f12-3iT}+\int_{\f12+3iT}^{\f12+i\infty}+\int_{1+3\varepsilon+3iT}^{\f12+3iT}\r)\l(6\pi \l(\f {nx}D\r)^{\f13}\r)^{-s}\Gamma(s)f\l(\f{\pi s}2\r)\ ds.\\
\inter{Since the Mellin transform of $f(x)$ is $f\l(\f{\pi s}2\r)\Gamma(s)$ for the function $f\in\{\sin, \cos\}$, we obtain  \be\label{mellin}f\l(6\pi \l(\f {nx}D\r)^{\f13}\r)=\f1{2\pi i}\int_{\f12-i\infty}^{\f12+i\infty}\l(6\pi \l(\f {nx}D\r)^{\f13}\r)^{-s}\Gamma(s)f\l(\f{\pi s}2\r)\ ds
\ee
from the Mellin inverse transform. And we deal with the other integrals by using the Stirling formula (\ref{stir}). We obtain \be
\int_{\f12\pm i\infty}^{\f12\pm3iT}\l(6\pi \l(\f {nx}D\r)^{\f13}\r)^{-s}\Gamma(s)f\l(\f{\pi s}2\r)\ ds=O\l(T^{\f12}\l(\f{D}{nx}\r)^{\f16}\r),\label{one}\ee
\be
\int^{1+3\varepsilon\pm3iT}_{\f12\pm3iT}\l(6\pi \l(\f {nx}D\r)^{\f13}\r)^{-s}\Gamma(s)f\l(\f{\pi s}2\r)\ ds=O\l(T^{\f12+\varepsilon}\l(\f{D}{nx}\r)^{\f13+\varepsilon}\r).\label{two}
\ee
Combine above two estimate (\ref{one}) and (\ref{two}) with identity (\ref{mellin}), it holds that when $n\le X$}
I_n=&f\l(6\pi \l(\f {nx}D\r)^{\f13}\r)+O\l(T^{\f12+\varepsilon}\l(\f{D}{nx}\r)^{\f13+\varepsilon}\r).\\
\inter{Next we deal with the case that $n\ge X$. From a way similar to the case that $n\le X$}
I_n=&\f1{2\pi i}\l(\int_{1+3\varepsilon-3iT}^{1+3\varepsilon-i}+\int^{1+3\varepsilon+i}_{1+3\varepsilon-i}+\int_{1+3\varepsilon+i}^{1+3\varepsilon+3iT}\r)\l(6\pi \l(\f {nx}D\r)^{\f13}\r)^{-s}\Gamma(s)f\l(\f{\pi s}2\r)\ ds\\
=&O\l(T^{1+\varepsilon}\l(\f{D}{nx}\r)^{\f13+\varepsilon}\r).\\
\end{align*}
\ba
\inter{Combine above results about $I_n$, }
&\frac1{2\pi i}\int_{C_3}\zeta_K(s)\frac{x^s}s\ ds\\
&=Cx^{\f13}D^{\f16}\sum_{n\le X}\f{a(n)}{n^{\f23}}f\l(6\pi \l(\f {nx}D\r)^{\f13}\r)+O\l(x^{-\varepsilon}D^{\f12+\varepsilon}T^{\f12+\varepsilon}\sum_{n\le X}\f{a(n)}{n^{1+\varepsilon}}\r)\\
&+O\l(x^{-\varepsilon}D^{\f12+\varepsilon}T^{1+\varepsilon}\sum_{n\ge X}\f{a(n)}{n^{1+\varepsilon}}\r)+O\l(D^{\f12+\varepsilon}T^{\f12+\varepsilon}x^{-\varepsilon}\r).\\
\end{align*}
Since the Dirichlet series $\sum\f{a(n)}{n^{1+\varepsilon}}$ is converges and it holds that 
\ba
\sum_{n\ge X}\f{a(n)}{n^{1+\varepsilon}}&=\int_{X}^{\infty}\f1{t^{1+\varepsilon}}\\
&=O\l(X^{-2-\varepsilon}\r)\\
&=O\l(x^{-6\alpha+2-\varepsilon}\r).
\end{align*}
This estimate gives 
\begin{eqnarray*}
&&\frac1{2\pi i}\int_{C_3}\zeta_K(s)\frac{x^s}s\ ds\\
&=&Cx^{\f13}D^{\f16}\sum_{n\le X}\f{a(n)}{n^{\f23}}f\l(6\pi \l(\f {nx}D\r)^{\f13}\r)+O\l(x^{2-6\alpha-\varepsilon}D^{\f12+\varepsilon}T^{1+\varepsilon}\r)+O\l(\f{x^{1+\varepsilon}D^{\f12+\varepsilon}}T\r).
\end{eqnarray*}
Since $\f12<\alpha<\f23$ we estimate 
\be\label{estim}\frac1{2\pi i}\int_{C_3}\zeta_K(s)\frac{x^s}s\ ds=Cx^{\f13}D^{\f16}\sum_{n\le X}\f{a(n)}{n^{\f23}}f\l(6\pi \l(\f {nx}D\r)^{\f13}\r)+O\l(\f{x^{1+\varepsilon}D^{\f12+\varepsilon}}T\r).\ee

\ba
\inter{We denote by $S$ the above sum. G. Kolesnik estimated a sum similar to $S$ for $\alpha=\f{53}{96}$ and arithmetic function $a(n)$ satisfying $|a(n)|=O(n^{\varepsilon})$ \cite{ko79}. We consider the sum $S$ for $\alpha=\f{53}{96}$ with following his way and we obtain }
S=&\sum_{n\le x^{\f{21}{32}}}\f{a(n)}{n^{\f23}}f\l(6\pi \l(\f {nx}D\r)^{\f13}\r)\\
=&\sum_{n\le x^{\f{11}{32}}}\f{a(n)}{n^{\f23}}f\l(6\pi \l(\f {nx}D\r)^{\f13}\r)+\sum_{x^{\f{11}{32}}\le n\le x^{\f{21}{32}}}\f{a(n)}{n^{\f23}}f\l(6\pi \l(\f {nx}D\r)^{\f13}\r).\\
\end{align*}
\ba
\inter{Let $N\le N'\le 2N\le x^{\f{21}{32}}$ then we can estimate the partial sum of $S$ as}
\sum_{N\le n\le N'}\f{a(n)}{n^{\f23}}f\l(6\pi \l(\f {nx}D\r)^{\f13}\r)&=O\left(N^{-\f23}\sum_{N\le n\le N_1}a(n)\exp\l(6\pi \l(\f {nx}D\r)^{\f13}\r)\r),
\inter{where $N_1\le N'$. In \cite{mu88} M\"uller noted that Kolesnik's way can be applied to this case and estimate this sum. Applying his result, it holds that}
\sum_{N\le n\le N_1}a(n)\exp\l(6\pi \l(\f {nx}D\r)^{\f13}\r)&=\l\{\begin{array}{ll}
O\l(N\r)& \text{ if } N\le x^{\f{11}{32}},\\
O\l(\l(\f xD\r)^{\f{11}{96}+\varepsilon}N^{\f23}+\l(\f xD\r)^{\f{7}{60}}N^{\f{197}{300}}\r)& \text{ if } x^{\f{11}{32}}\le N.\\
\end{array}\r.\\
\end{align*}
As a result, we can estimate the sum $S$ as
\[\sum_{n\le x^{\f{21}{32}}}\f{a(n)}{n^{\f23}}f\l(6\pi \l(\f {nx}D\r)^{\f13}\r)=O\l(x^{\f{11}{96}+\varepsilon}\r).\]
This result and estimate (\ref{estim}) lead to
\be\label{c3}
\frac1{2\pi i}\int_{C_3}\zeta_K(s)\frac{x^s}s\ ds=O\l(x^{\f{43}{96}+\varepsilon}D^{\f13}\r).
\ee
In the same way to estimate in the proof of Theorem \ref{num} for other integrals, it is obtained that
\be\label{last}
I_K(x)=\frac1{2\pi i}\int_{C_1+C_2+C_4}\zeta_K(s)\frac{x^s}s\ ds+O\left(\frac{x^{1+\varepsilon}}{T}\right)+O\l(\frac{x^{1+\varepsilon}T^{\varepsilon}D^{\varepsilon}}{T}\right).\\
\ee

\ba
\inter{The Cauchy residue theorem and estimate (\ref{c3}) and (\ref{last}) lead to }
I_K(x)&=\frac1{2\pi i}\int_{C}\zeta_K(s)\frac{x^s}s\ ds+O\l(x^{\f{43}{96}+\varepsilon}D^{\f13+\varepsilon}\r)+O\l(\frac{x^{1+\varepsilon}T^{\varepsilon}D^{\varepsilon}}{T}\right)\\
&=cx+O\l(x^{\f{43}{96}+\varepsilon}D^{\f13+\varepsilon}\r)+O\l(\frac{x^{1+\varepsilon}T^{\varepsilon}D^{\varepsilon}}{T}\right).\\
\intertext{We select $T=D^{\f16+\varepsilon}x^{\f{53}{96}}$, this becomes}
I_K(x)&=c x+O\l(x^{\f{43}{96}+\varepsilon}D^{\f13}\r).
\inter{This proves this theorem.}
\end{align*}
\end{proof}

When we fix the cubic field $K$, then $D$ becomes constant and our theorem (Theorem \ref{cubic}) agrees to M\"uller's theorem (Estimate (\ref{cub})). Lemma \ref{main} and Theorem \ref{cubic} lead to 
\begin{thm}
\label{cubicord}
For every $\varepsilon>0$ the following estimate holds.
\[E_m^r(x,K)=\l\{\begin{array}{ll}
O\l(x^{\f1r\l(\frac {139}{96}+\varepsilon\r)}D^{\frac23-\frac {m-1}2}\right)& \text{ if } rm=2,\\
O\left(x^{m-\frac {53}{96}+\varepsilon}D^{\frac13-\frac{m-1}2}\r)& \text{ otherwise }
\end{array}\r.\]
as  $x^{\f{53}{32}}D^{\f32}\rightarrow\infty$,
where $K$ runs through all cubic extension fields with $x^{\f{53}{96}-\varepsilon} > D^{\f56}$.
\end{thm}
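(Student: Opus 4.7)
The plan is to derive the theorem as a direct consequence of Lemma~\ref{main} combined with the uniform estimate on $\Delta_K(x)$ furnished by Theorem~\ref{cubic}. Theorem~\ref{cubic} supplies the input
\[
I_K(x) = cx + O\!\left(x^{43/96 + \varepsilon} D^{1/3}\right)
\]
uniformly over cubic $K$ under the hypothesis $x^{53/96 - \varepsilon} > D^{5/6}$, so in the notation of Lemma~\ref{main} one takes $\alpha_S = 53/96$ and $\beta_S = 1/3$, absorbing the $\varepsilon$'s as usual.

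Before invoking the lemma I would verify that its compatibility condition $x^{2\alpha_S} > D^{1+2\beta_S}$ reduces to the hypothesis already in force: with these constants it reads $x^{53/48} > D^{5/3}$, i.e.\ the square of $x^{53/96} > D^{5/6}$. The asymptotic $x^{53/32} D^{3/2} \to \infty$ stated in the conclusion is similarly nothing but the cube of $x^{53/96} D^{1/2} \to \infty$, a mild strengthening of $xD \to \infty$ that becomes visible once the values of $\alpha_S$ and $\beta_S$ are substituted.

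The core of the proof is then to plug $\alpha_S, \beta_S$ into the four-branch conclusion of Lemma~\ref{main} and read off the dominant summand. For $(r,m) = (2,1)$ the first branch applies (note $\alpha_S = 53/96 \neq 0 = (mr-2)/(r-1)$) and the second summand $x^{139/192} D^{2/3}$ beats the first since $x^{53/192} D^{1/3} \geq 1$ under our hypothesis. For $(r,m) = (1,2)$ the fourth branch produces $x^{139/96} \log x \cdot D^{-1/6} + x^{139/96} D^{-2/3}$, and since $D \geq 1$ both terms are bounded by the uniform $rm = 2$ expression $x^{(139/96)(1/r) + \varepsilon} D^{2/3 - (m-1)/2}$ stated in the theorem. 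For $rm \neq 2$, the first summand $x^{m - 53/96} D^{1/3 - (m-1)/2}$ is the one that controls things and matches the theorem's second branch.

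The only real work, and the step most likely to trip one up, is the bookkeeping: identifying which of the four branches of Lemma~\ref{main} applies to each pair $(r,m)$, comparing the two summands in each branch against the hypothesis $x^{53/96 - \varepsilon} > D^{5/6}$, and quietly absorbing any $\log x$ factor into $x^{\varepsilon}$. No additional analytic input is required beyond the Dirichlet-series and contour estimates already used for Theorem~\ref{cubic}; the argument is structurally identical to the deduction of Theorem~\ref{abelian} from Theorem~\ref{abel} in the preceding section.
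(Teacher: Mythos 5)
Your proposal follows the paper's route exactly: the paper gives no argument for Theorem \ref{cubicord} beyond the sentence that Lemma \ref{main} and Theorem \ref{cubic} lead to it, and your substitution $\alpha_S=53/96$, $\beta_S=1/3$ into the four branches of Lemma \ref{main} is precisely that deduction. Your check of the side condition $x^{2\alpha_S}>D^{1+2\beta_S}$ and your handling of the two $rm=2$ cases are correct.

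One warning about the step you yourself single out as the likely trap: in the ``otherwise'' branch the first summand of Lemma \ref{main} does \emph{not} always control the second. For $(m,r)=(1,3)$ the two summands are $x^{43/96}D^{1/3}$ and $x^{139/288}D^{2/3}$; since $43/96=129/288<139/288$ and $D\ge1$, the second summand is strictly larger, and the hypothesis $x^{53/96-\varepsilon}>D^{5/6}$ (which permits $D$ as large as $x^{53/80}$) does not repair this. The same failure occurs for $m=1$ and $3\le r\le6$ on part of the admissible $(x,D)$ range. So the claim that the first summand ``controls things'' in the otherwise case needs either an additional restriction on $(m,r)$ or the weaker conclusion carrying both summands; this defect is inherited from the paper's own statement rather than introduced by your argument, but a complete proof cannot pass over it silently. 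For $m\ge2$ (any $r$) and for $rm=2$ your comparisons do go through.
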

If $K$ runs through all cubic extension fields with $x^{\f{5}{32}} > D$, then the uniform upper bounds of $E_m^r(x,K)$ in this theorem is better than that under assuming the Lindel\"  of Hypothesis. 

\section{Conjecture}
Theorem \ref{cubicord} states good uniform upper bounds. We propose that for all number fields $K$ the best uniform upper bound of the error term is better than that on the assumption of the Lindel\"  of Hypothesis. Our conjecture is 
\begin{conj}
\label{con}
For every $\varepsilon> 0$, we have
\[E_m^r(x,K)=\left\{
\begin{array}{ll}
o\l(x^{\f1r(\f32+\varepsilon)}D^{2\varepsilon-\f{m-1}2}\r)&\text{ if } rm=2,\\
o\l(x^{m-\f12+\varepsilon}D^{\varepsilon-\f{m-1}2}\r)&\text{ otherwise }
\end{array}
\right.\]
as $xD^2\rightarrow\infty$, where $K$ runs through all number fields with $x^{1-2\varepsilon}>D^{1+2\varepsilon}$.
\end{conj}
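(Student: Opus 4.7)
The plan is to attack the conjecture via Lemma~\ref{main}: taking $\alpha_S = \frac12 - \varepsilon$ and $\beta_S = \varepsilon$ turns the lemma's side condition $x^{2\alpha_S} > D^{1+2\beta_S}$ into the required $x^{1-2\varepsilon} > D^{1+2\varepsilon}$, and a short calculation shows that each of the four conclusions of the lemma then specialises to a bound of exactly the shape asserted in Conjecture~\ref{con}. Consequently the whole conjecture reduces to proving the single uniform ideal-counting estimate
\[
I_K(x) = cx + o\!\left(x^{1/2+\varepsilon} D^{\varepsilon}\right) \qquad \text{as } xD^2 \to \infty,
\]
valid for all number fields $K$ with $x^{1-2\varepsilon} > D^{1+2\varepsilon}$.

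To establish this bound I would follow the contour-integration strategy of Theorems~\ref{num}, \ref{abel} and \ref{cubic}, but with the left vertical side of the rectangle $C$ moved to $\sigma = \frac12$. The horizontal segments and the vertical segment at $\sigma = 1+\varepsilon$ are handled exactly as in Theorem~\ref{num}; the substantive new input needed is a sharp uniform estimate on the critical line. Specifically, if one had the uniform Lindel\"of hypothesis
\[
\zeta_K\!\left(\tfrac12+it\right) = O\!\left((|t|^n D)^{\varepsilon}\right) \qquad \text{as } |t|^n D \to \infty,
\]
then Phragmen--Lindel\"of (Lemma~\ref{plp}) applied on $\frac12 \le \sigma \le 1+\varepsilon$ together with an optimal choice of the truncation parameter $T$ (a suitable power of $xD^{-1}$) would deliver the $O$-version of the displayed estimate for $I_K(x)$. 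Upgrading this to the strict little-$o$ demanded by the conjecture would require exploiting genuine oscillation of $\zeta_K(\frac12+it)$, for instance through a functional-equation/Voronoi expansion in the spirit of the cubic argument of Section~5, where Kolesnik's exponential-sum bound produced precisely such an additional saving beyond Lindel\"of.

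The main obstacle is therefore twofold. First, the classical Lindel\"of hypothesis for $\zeta_K$ is open for a single fixed $K$; second, what the conjecture demands is an even stronger estimate, \emph{uniform} as $K$ varies. A realistic intermediate target would be to replace the convexity bound~(\ref{convex}) by an unconditional uniform hybrid subconvex bound of the shape $\zeta_K(\frac12+it) = O((|t|^n D)^{1/4 - \delta + \varepsilon})$ for some absolute $\delta > 0$, whereupon inserting this into the scheme of Theorem~\ref{main2} would automatically push the uniform upper bound for $E_m^r(x,K)$ towards the conjectured shape. The evidence of Theorem~\ref{cubic} suggests that for each fixed degree $n$ one should in fact recover the full $x^{1/2+\varepsilon}$-quality through a Voronoi-type summation formula for $\mathcal{O}_K$ combined with exponential-sum estimates; making these devices work uniformly as $n$ and $D$ vary simultaneously is precisely where fundamentally new analytic input appears to be required.
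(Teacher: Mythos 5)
This statement is a \emph{conjecture}; the paper offers no proof of it, only the remark in Section 6 that it is equivalent to the uniform ideal-counting estimate $I_K(x)=cx+o\l(x^{\f12}D^{\varepsilon}\r)$ (estimate (\ref{idealco})), together with a discussion of why that estimate is out of reach. Your reduction via Lemma \ref{main} with $\alpha_S=\f12-\varepsilon$ and $\beta_S=\varepsilon$ is exactly this equivalence, and your checking that all four branches of the lemma specialise to the conjectured shape is correct (including the degenerate branch $\alpha_S=\f{mr-2}{r-1}$, which only occurs for $(m,r)=(1,3)$ and is absorbed by the $\varepsilon$ in the exponent). Two minor caveats: Lemma \ref{main} is stated with $O$-inputs and $O$-outputs, so to conclude the little-$o$ form of the conjecture you need to note that the same argument converts an $o$-input into an $o$-output, a routine but not literally stated modification; and your proposed intermediate target, a uniform hybrid subconvex bound $\zeta_K(\f12+it)=O((|t|^nD)^{\f14-\delta+\varepsilon})$, would only improve Theorem \ref{main2} — it would still leave the exponent of $x$ strictly above $\f12$ and so would not establish the conjecture itself, as you correctly flag.

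The essential point is that everything after the reduction in your proposal is a programme, not a proof: the uniform Lindel\"of-type bound on the critical line (or a uniform Voronoi/exponential-sum argument valid simultaneously in $n$ and $D$) that your contour-shift requires is precisely the open content of the conjecture. Your write-up is honest about this, so there is no error to report — but it should not be read as a proof, and indeed the paper treats the statement as conjectural for exactly the reasons you identify.
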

From Lemma \ref{main} it is very important to obtain good uniform bound of the distribution of ideals of $\mathcal O_K$. This conjecture \ref{con} is equivalent to the following statement:

For every $\varepsilon> 0$, we have
\be\label{idealco}
I_K(x)=cx+o\l(x^{\f12}D^{\varepsilon}\r)
\ee
as $xD^2\rightarrow\infty$, where $K$ runs through all number fields with $x^{1-2\varepsilon}>D^{1+2\varepsilon}$.

One can check this out easily.
On the other hand, when number field $K$ is fixed the following omega estimates is obtained in \cite{gk05}: 

For fixed number field $K$ with $n=[K:\mathbf Q]\ge2$ 
\be\label{omega}
I_K(x)=cx+\Omega\l(x^{\f12-\f{1}{2n}}(\log x)^{\f12-\f{1}{2n}}(\log\log x)^{\kappa}(\log\log\log x)^{\lambda}\r),
\ee
where $\kappa$ and $\lambda$ are constants depending on $K$. To be more precise, let $K^{gal}$ be the Galois closure of $K/\mathbf Q$ then two constants $\kappa$ and $\lambda$ depend on two Galois group $Gal\l(K^{gal}/K\r)$ and $Gal\l(K^{gal}/\mathbf{Q}\r)$.

From this estimate (\ref{omega}), our conjecture may not give the best estimate for uniform upper bound of $\Delta_K(x)$. In the case that $K=\mathbf Q(\sqrt{-1})$, it is known that considering $I_K(x)$ is equivalent to the Gauss circle problem. The Gauss circle problem states that $I_K(x)=cx+O(x^{\f14+\varepsilon})$ for every $\varepsilon> 0$ so Estimate (\ref{omega}) may be the best lower bound of $\Delta_K(x)$ for $K=\mathbf Q(\sqrt{-1})$. 

This conjecture is very difficult even when $K$ is fixed. When $K$ is a fixed number field with $n=[K:\mathbf Q]$ it is shown that for all $\varepsilon>0$
\be\label{good}\Delta_K(x)=\l\{\begin{array}{ll}
O\l(x^{\f{23}{73}}\l(\log x\r)^{\f{315}{146}}\r)& \text{ if }n=2,\\
O\l(x^{\f{43}{96}+\varepsilon}\r)& \text{ if }n=3
\end{array}\r.
\ee
in \cite{HW01} and \cite{mu88} respectively. This estimates (\ref{good}) are better than that under the assumption of the Lindel\"  of Hypothesis.
On the other hand if $K$ is a fixed number field with $n=[K:\mathbf Q]\ge4$ the best upper bound of $\Delta_K(x)$ hitherto is 
\be\label{hith} \Delta_K(x)=\l\{\begin{array}{ll}
O\l(x^{\f{41}{72}+\varepsilon}\r)& \text{ if }n=4,\\
O\l(x^{1-\f4{2n+1}+\varepsilon}\r)& \text{ if }5\le n\le10,\\
O\l(x^{1-\f3{n+6}+\varepsilon}\r)& \text{ if }11\le n
\end{array}\r.
\ee
for all $\varepsilon>0$. The case $4\le n\le10$ is shown in \cite{bo15} and the other case $11\le n$ is \cite{La10}.
This estimate (\ref{hith}) is not better than that under the assumption of the Lindel\"  of Hypothesis. 

Two Galois groups $Gal\l(K^{gal}/K\r)$ and $Gal\l(K^{gal}/\mathbf{Q}\r)$ have many informations about distribution of ideals from the algebraic number theory. Thus let $G$ be a fixed group and $H$ be a fixed normal subgroup of $G$, it is very important to improve the upper bound of $\Delta_K(x)$ as $K$ runs through all extensions where two groups $Gal\l(K^{gal}/K\r)$ and $Gal\l(K^{gal}/\mathbf{Q}\r)$ are $H$ and $G$ respectively.

\end{document}